\let\csname equation*\endcsname\relax
\let\csname endequation*\endcsname\relax
\theoremstyle{definition}
\newtheorem{theorem}{Theorem}
\newtheorem{corollary}[theorem]{Corollary}
\newtheorem{lemma}[theorem]{Lemma}
\newtheorem{proposition}[theorem]{Proposition}
\DeclareMathOperator*{\argmin}{argmin \; }
\DeclareMathOperator*{\minimize}{minimize \quad}
\DeclareMathOperator*{\subto}{subject\, to \quad}
\newcommand{\reals}{\ensuremath{\mathbb{R}}}
\newcommand{\norm}[1]{ \left \Vert #1 \right \Vert }
\newcommand{\Bfun}{\mathscr{B}}
\newcommand{\Xset}{\mathcal{X}}
\newcommand{\Yset}{\mathcal{Y}}
\newcommand{\Zset}{\mathcal{Z}}
\newcommand{\Active}{\mathcal{A}}
\newcommand{\Inactive}{\mathcal{I}}
\newcommand{\Trans}{T}
\newcommand{\bSeries}{\boldsymbol{\beta}}
\newcommand{\sepr}{\,|\,}
\newcommand{\Proj}{\mathcal{P}}
\newcommand{\boxSet}{\mathcal{B}}
\newcommand{\beq}{\begin{equation}}
\newcommand{\eeq}{\end{equation}}
\newcommand{\bbm}{\begin{bmatrix}}
\newcommand{\ebm}{\end{bmatrix}}
\newcommand{\str}{\leftarrow}
\newcommand{\kron}{\raisebox{1pt}{\ensuremath{\:\otimes\:}}} 
\newcommand{\BFL}{\begin{flalign}}
\newcommand{\EFL}{\ensuremath{\end{flalign}}}
\newcommand{\pinv}{+}
\begin{document}
\title[Generalizing Variable Elimination Beyond Least Squares]{A Generalization of Variable Elimination for Separable Inverse Problems Beyond Least Squares}
\author{Paul Shearer, Anna C. Gilbert}
\date{\today}

\begin{abstract}
In linear inverse problems, we have data derived from a noisy linear transformation of some unknown parameters, and we wish to estimate these unknowns from the data. Separable inverse problems are a powerful generalization in which the transformation itself depends on additional unknown parameters and we wish to determine both sets of parameters simultaneously. When separable problems are solved by optimization, convergence can often be accelerated by elimination of the linear variables, a strategy which appears most prominently in the variable projection methods due to Golub and Pereyra. Existing variable elimination methods require an explicit formula for the optimal value of the linear variables, so they cannot be used in problems with Poisson likelihoods, bound constraints, or other important departures from least squares. 

To address this limitation, we propose a generalization of variable elimination in which standard optimization methods are modified to behave as though a variable has been eliminated. We verify that this approach is a proper generalization by using it to re-derive several existing variable elimination techniques. We then extend the approach to bound-constrained and Poissonian problems, showing in the process that many of the best features of variable elimination methods can be duplicated in our framework. Tests on difficult exponential sum fitting and blind deconvolution problems indicate that the proposed approach can have significant speed and robustness advantages over standard methods. 

\end{abstract}
\section{Introduction}
In linear inverse problems we are given a vector of noisy data $b \in \reals^m$ generated by the linear model 
$b = Az + \epsilon$, where $A \in \reals^{m \times c}$ is a known matrix, $\epsilon$ is a zero mean noise vector, and $z \in \reals^{N_z}$ is an unknown vector with $N_z = c$ entries we wish to estimate. In separable inverse problems, $A$ is not known exactly, but depends on another set of parameters $y \in \reals^{N_y}$:
\beq
	 b = A(y)z + \epsilon.
	\label{cond_lin_eqn}
\eeq
The problem is now to determine the full set of $N = N_y + N_z$ parameters $x \triangleq (y,z)$.

Many scientific inverse problems are separable. In time-resolved spectroscopy and physical chemistry, data are often modeled as a weighted sum of several (possibly complex) exponentials with unknown decay rates \cite{Yamaoka:sumOfExpAIC:1978, Mullen:varProInSpect:2009}. Determining the weights and decay rates simultaneously is a separable inverse problem. Other examples include image deblurring with an incompletely known blur kernel \cite{Campisi:BlindImageDeconvolution:2007} and tomographic reconstruction from incomplete geometric information \cite{Chung:2010}. Many more examples can be found in \cite{Golub:separablenonlinear:2002,hansen2013least,pereyra2010exponential}. 

Separable problems frequently have additional exploitable structure. In this paper, we will be particularly interested in problems with multiple measurement vectors generated by applying a single linear transformation to $n$ different vectors of linear coefficients. In this case, the data and coefficient vectors can be represented by matrices $B \in \reals^{m \times n}$ and $Z \in \reals^{c \times n}$, and we have
\beq
	B = A(y)Z + E.
	\label{cond_lin_mat_eqn}
\eeq
This problem, also known as a \emph{multiple right-hand sides} or \emph{multi-way data} problem \cite{golub1979extensions,kaufman1992separable,mullen2007timp}, occurs when a system is repeatedly observed under varying experimental conditions \cite{Mullen:varProInSpect:2009}. 

An inverse problem is generally solved by seeking parameter values that balance goodness of fit with conformity to prior expectations. In this paper we focus on constrained maximum likelihood problems, where we choose a goodness of fit function $L(A(y)z)$ measuring discrepancy between $A(y)z$ and $b$ and a set $\Xset = \Yset \times \Zset$ representing known constraints on $y$ and $z$, such as nonnegativity. We seek the parameter values that minimize the discrepancy subject to the constraints by solving
\beq
	\min_{y \in \Yset \,z \in \Zset} \bigg\{ F(y,z) \triangleq L(A(y)z)  \bigg\}.
	\label{fullObj}
\eeq
Penalty functions such as $\ell_p$ norms on $y$ and $z$ may also be incorporated into $F(y,z)$, and while our techniques are relevant to this case, it is not specifically addressed here. For the goodness of fit function we use the negative log-likelihood $L(\mu) = -\log p(b \sepr \mu)$, where the likelihood function $p(b \sepr \mu)$ is the probability that $b = \mu + \epsilon$ and is determined by the distribution of $\epsilon$. Least squares problems result from assuming standard Gaussian distributed noise, so that $L(\mu) = \frac{1}{2} \norm{\mu - b}^2$, but Poissonian and other likelihoods frequently arise.

Unconstrained least squares problems are generally easiest to solve, and many powerful optimization ideas were first developed for this case \cite{NocedalWright:NumericalOptimization:2006}. However, unconstrained least squares solutions are not always satisfactory, and much better solutions can often be found using nonnegativity constraints, Poisson likelihoods, or other departures from ordinary least squares. Many physical quantities must be nonnegative, and enforcing this constraint can reduce reconstruction error \cite{bardsley2004nonnegatively} and help the optimizer avoid unphysical answers \cite{SimaVanHuffel:varProBndMRS:2007}. A Poisson process is often the best model for a stream of particles entering a detector, and in the low-count limit the Poisson and Gaussian distributions are very different. In this case Poissonian optimization usually gives significantly better parameter estimates than least squares, a fact of fundamental importance in astronomy \cite{bardsley2004nonnegatively,vogelComputationalMethods, bertero2009image}, analytical chemistry \cite{maus2001experimental}, and biochemistry \cite{Larson:2010fk,Laurence:2010:mlePoisson}, where information must be extracted efficiently from a trickle of incoming photons.  This paper is concerned with advancing the state of the art for problems beyond least squares.

\subsection{Existing optimization methods}
We will focus on optimization methods employing \emph{Newton-type} iterations. While other powerful methods exist for inverse problems, Newton-type methods enjoy very general applicability, attractive convergence properties, scalability under favorable conditions, and robustness against ill-conditioning and nonconvexity \cite{NocedalWright:NumericalOptimization:2006}. Given a smooth function $f(u)$, a constraint set $\mathcal{U} \subset \reals^{N_u}$, and an initial point $u^0 \in \mathcal{U}$, a Newton-type method generates a sequence of iterates $u^1,u^2,\ldots$ which hopefully converge to the minimizer of $f(u)$ in $\mathcal{U}$, or at least a stationary point. Line search methods, which will be the focus of this paper, generally use the following update procedure to go from $u^k$ to $u^{k+1}$ \cite{NocedalWright:NumericalOptimization:2006, kelley1987iterative}:

\begin{enumerate}
\item \emph{Search direction:} A search direction $\Delta u$ is calculated by solving a Newton-type system of the form $B \Delta u = -g$, 
	where $g$ is determined from the gradient $\nabla f(u^k)$ and $B$ is a \emph{Hessian model} approximating $\nabla^2 f(u^k)$. 
	Both $g$ and $B$ may be modified by information from constraints and previous iterates.	
\item \emph{Trial point calculation:} The step $\Delta u$ determines a search path $u_p(s)$, parametrized by a step size $s > 0$, 
	from which a trial point $\bar{u}$ is selected. This is generally a straight-line path modified to maintain feasibility with respect
	to constraints or hedge against a bad search direction.	
\item \emph{Evaluation and decision:} If moving to the trial point produces a sufficient decrease in the objective,
	 we set $u^{k+1} = \bar{u}$. Otherwise, another trial point is constructed, possibly along a new direction $\Delta u$,
	 and the process is repeated.
\end{enumerate}

This update procedure is used in the service of some larger strategy for optimizing $F(y,z)$. To understand the strategies typically used, it is helpful to first consider strategies for solving the block-structured system $B \Delta x = - g$. This system has the block expansion 
\beq
	\begin{bmatrix}
		B_{yy} & B_{yz} \\
		B_{zy} & B_{zz}
	\end{bmatrix}
	\begin{bmatrix}
		\Delta y \\
		\Delta z
	\end{bmatrix}
	=
	-
	\begin{bmatrix}
		g_y \\
		g_z
	\end{bmatrix},
	\label{fullSystem}
\eeq	
and is typically solved in one of three ways. (In the following, the product $M^{-1} w$ should be interpreted as a directive to solve $M v = w$ for $v$ rather than to compute $M^{-1}$ explicitly, and when we speak of inversion we refer to this directive.) 
\begin{enumerate}
\item \emph{Full matrix, all-at-once.} We solve the whole system at once by QR or Cholesky factorization in medium-scale problems, and by conjugate gradients (CG) in very large-scale problems. 
\item \emph{Block Gauss-Seidel.} We converge to a solution by iterative updates of the form 
\begin{align}
	 \Delta y^{j+1} &= -B_{yy}^{-1}(g_y - B_{yz} \Delta z^j) \\
	 \Delta z^{j+1} &= -B_{zz}^{-1}(g_z - B_{zy} \Delta y^j).
\end{align}
Gauss-Seidel is fast provided that $B_{yy}$ and $B_{zz}$ are much easier to invert than all of $B$ and a block diagonal approximation of $B$ is reasonably accurate, but may be arbitrarily slow to converge otherwise \cite{saad1996iterative}. 
\item \emph{Block Gaussian elimination.} By solving for $\Delta z$ in the bottom row of \eqref{fullSystem} 
and substituting the result into the top row equation, we decompose \eqref{fullSystem} as
\begin{subequations}
\begin{align}
	B_s \Delta y
	&= -g_y + B_{yz} B_{zz}^{-1} g_z \label{y_schur_eqn} 
	\\
	B_{zz} \Delta z
	&= -g_z - B_{zy} \Delta y, 
	\label{z_schur_eqn}
\end{align}
\label{schur_system}%
\end{subequations}
where $B_s \triangleq B_{yy} - B_{yz} B_{zz}^{-1} B_{zy}$ is the Schur complement of $B_{zz}$ in $B$ \cite{boyd2004convex}. We construct the matrix $B_s$ explicitly, solve for $\Delta y$ in \eqref{y_schur_eqn}, then plug the result into \eqref{z_schur_eqn} to solve for $\Delta z$.
\end{enumerate}

\noindent Assuming $B$ is positive definite, all three of these linear solvers can be interpreted as a method for minimizing the quadratic form $\frac{1}{2} \Delta x^\Trans B \Delta x + g^\Trans \Delta x$. Each of them can also be generalized to an update strategy for the nonquadratic problem \eqref{fullObj}, as follows:

\begin{enumerate}
\item \emph{Full update:} We update $y$ and $z$ simultaneously using a step derived from solving the full system \eqref{fullSystem}. 
Any classical Newton-type method applied directly to $F(y,z)$ falls into this category \cite{NocedalWright:NumericalOptimization:2006}. 
\item \emph{Alternating update:} We make one or more updates to $z$ with $y$ fixed, then to $y$ with $z$ fixed, alternating until convergence \cite{bertsekas1999nonlinear}. Alternating methods now have well-developed convergence theory even with inexact alternating minimizations, and their iterations do not necessarily require matrix factorizations \cite{bonettini2011inexact,grippo1999globally}. As such, they may be the only tractable choice for certain large-scale and highly non-parametric problems such as nonnegative matrix factorization. However, like Gauss-Seidel, alternating methods can converge slowly \cite{Buchanan05,ruhe1980algorithms} and are generally preferable only when full updates are computationally expensive or intractable. In this paper we will focus on problems where full update methods are tractable, so alternation will not be considered further. 
\item \emph{Reduced update:}  We determine the optimal $z$ value given $y$,
\beq
	z_m(y) = \argmin_{z \in \Zset} L(A(y)z),
	\label{zOpt}
\eeq
and substitute it into \eqref{fullObj}, giving an equivalent reduced problem
\beq
	\min_{y \in \Yset}  \bigg\{ F_r(y) \triangleq F(y,z_m(y)) \bigg\}
	\label{reducedObj}
\eeq
to which the Newton-type iteration is applied, meaning that we set $f(u) = F_r(y)$ instead of $F(y,z)$. The resulting update has a nested structure: an outer optimizer computes the search direction $\Delta y$ and trial point $\bar{y}$, while an inner optimizer calculates $z$ by solving \eqref{zOpt} whenever the outer one asks for the value of $F_r(y)$ or its derivatives. 
\end{enumerate}

Most reduced update methods are variations on the variable projection algorithm of Golub and Pereyra \cite{golub1973differentiation,Golub:separablenonlinear:2002}, which applies to the case of unconstrained separable least squares. In this case we have $F(y,z) = \frac{1}{2} \norm{A(y)z - b}^2$ and $z_m(y) = A(y)^\pinv b$, where $X^\pinv$ denotes the Moore-Penrose pseudoinverse. Substituting $z_m(y)$ into $F(y,z)$ yields $F_r(y) = \frac{1}{2} \norm{-P_A^\perp b}^2$, where $P_X^\perp = I - XX^\pinv$ denotes the projection onto $\text{range}(X)^\perp$, and the $y$ in $A(y)$ has been suppressed. Golub and Pereyra proposed using a Gauss-Newton method to optimize $F_r(y)$; this requires the Jacobian for the reduced residual $- P_A^\perp b$, which they derived by differentiation of pseudoinverses. This idea can also be extended to accommodate linear constraints on $z$.

The efficiency of variable projection in highly ill-conditioned curve fitting and statistical inference problems is theoretically and empirically well-attested \cite{Golub:separablenonlinear:2002,ruhe1980algorithms,smyth1996partitioned,o2012variable}. Variable projection is also useful for problems with multiple measurement vectors  \cite{golub1979extensions,kaufman1992separable,mullen2007timp}, as in this case $A(y)$ is block diagonal, so necessary pseudoinverses and derivatives may be efficiently computed blockwise. Other methods based on variable elimination can speed up the solution of large-scale image and volume reconstruction problems if the pseudoinverse and derivatives can be computed quickly \cite{Vogel:PhaseDiv:1998, gilles2002computational,fan2011efficient,Chung:2010}.

Given the efficiency of variable elimination methods in separable least squares problems, one might hope to derive an extension with similar advantages to problems beyond least squares. However, such an extension runs into several difficulties. First, in problems beyond least squares there is generally no analytical formula for $z_m(y)$, and computing it is often computationally expensive. Second, if inequality constraints or nonsmooth penalties are imposed on $z$, then $z_m(y)$ will be a nonsmooth function with unpredictable properties, so that the reduced problem may be even more difficult than the original. Third, without a formula for $z_m(y)$ it is unclear how to compute $Dz_m(y)$, which is needed for a fast-converging second-order method.

\subsection{Our contribution} \label{our_contribution}

Variable elimination does not seem to generalize easily to non-quadratic and constrained problems, but there are many efficient and robust full update methods for such problems \cite{NocedalWright:NumericalOptimization:2006}. This fact suggests that we might arrive at a generalization more easily from the other direction, by making existing full update methods resemble reduced update methods more closely. In this paper we explore the resulting \emph{semi-reduced} update methods, explain how they relate to full and reduced update methods, describe when they are useful, and validate our claims with numerical experiments on hard inverse problems similar to those encountered in practice.

In \S\ref{global_conv} we show how to transform a full update method into a reduced method without an explicit formula for $z_m(y)$. We begin by applying two specific changes to a full update method: first, use block Gaussian elimination instead of an all-at-once solver, and second, adjust every new trial point's $z$ coordinate to a better value before the trial point is evaluated. This second technique, which we call \emph{block trial point adjustment}, is depicted graphically in Fig.~\ref{visual_mod}, \emph{right}. We call a full update method thus modified a semi-reduced method. Reduced methods are obtained from semi-reduced methods by requiring that the adjustment be optimal, which enables us to simplify the method by omitting computations of $\nabla_z F$ and the search direction $\Delta z$. We show reduced Newton and variable projection methods can be derived in this way.  In \S\ref{bound_constrained_opt}, we propose and prove convergence of a semi-reduced method that allows for nonquadratic likelihoods and bound constraints on $z$, which has been posed as an open problem by multiple authors \cite{Chung:2010, Mullen:varProInSpect:2009}. 
 
 The description of reduced and semi-reduced methods as modifications of full update methods allows us to predict when the former have advantages over the latter. Block Gaussian elimination is most effective when $B_{zz}$ is easier to invert than all of $B$, for example when $B_{zz}$ has block diagonal (Fig.~\ref{visual_mod}), Toeplitz, banded, or other efficiently invertible structure. Block trial point adjustment should yield an efficiency gain when the computational burden of the adjustment subproblems is outweighed by an increase in convergence rate. This may occur when the graph of the objective contains a narrow, curved valley like that shown in Fig.~\ref{visual_mod}. 

To test these predictions we select problems where we expect semi-reduced methods to have an advantage,  design methods for these problems using the semi-reduced framework, then compare the semi-reduced methods to standard full update methods. In \S \ref{sec:block_lin_algs} we derive linear algebra techniques that use block Gaussian elimination to exploit block structure or spectral properties of $B$, and in \S \ref{exp_sum_fit} and \S \ref{semiblind} we study two problems of scientific interest where these techniques have advantages over standard full-matrix methods. In \S \ref{model_adjustment_prob} we consider a toy blind deconvolution problem where block trial point adjustment leads to a significant increase in convergence rate due to a curved valley geometry. We conclude that semi-reduced methods can have significant advantages over full update methods under the predicted conditions.

\subsection{Related work} \label{related_work}

While the relationship between full and reduced update methods has been explored several times, the relationship established here is a major extension of previous work. In \cite{ruhe1980algorithms} Ruhe and Wedin developed the connection between full and reduced update Newton and Gauss-Newton methods, and semi-reduced methods are described by Smyth as \emph{partial Gauss-Seidel} or \emph{nested} methods in \cite{smyth1996partitioned}. Our work extends theirs in that we consider general Newton-type methods, nonquadratic likelihoods, and the effect of globalization strategies, such as line search or trust regions, which ensure convergence to a stationary point from arbitrary initialization. A very general theoretical analysis of the relationship between the full and reduced problems is given in \cite{parks1985reducible}, but there is little discussion of practical algorithms and no mention of semi-reduced methods.

Structured linear algebra techniques such as block Gaussian elimination are known to be useful \cite{chan1985approximate,zhang2005schur}, but they are underutilized in practice. This is apparent from the fact that most optimization codes employ a limited set of broadly applicable linear algebra techniques \cite{NocedalWright:NumericalOptimization:2006}, and very few are designed to accommodate user-defined linear solvers such as the ones we propose in \S \ref{sec:block_lin_algs}. We contend that since significant speed gains are attainable with special linear solvers,  optimization algorithm implementations should accommodate user-customized linear algebra by adding appropriate callback and reverse communication protocols.

Trial point adjustment is a key idea in the two-step line search and trust region algorithms of \cite{Conn:1998:techreport} and \cite{conn:twoStep:1999}. General convergence results are proven in \cite{absil2009accelerated} for `accelerated' line search and trust region methods employing trial point adjustment. These works are not concerned with separable inverse problems or the relationship with reduced methods.

Extensions of variable projection beyond unconstrained least squares have been proposed, in particular to accommodate bound constraints on $z$ \cite{SimaVanHuffel:varProBndMRS:2007, Nagy:nonnegVarPro:2012}. Their approach is to apply a Newton-type method to minimize $\tilde{F}_r(y) = F(y,\tilde{z}_m(y))$, an approximation of $F_r(y) = F(y,z_m(y))$ obtained by computing $z_m(y)$ approximately using a projected gradient or active set method. This approach can work well, but it has several theoretical and practical downsides. First, it has not been extended to nonquadratic likelihoods; second, computing $z_m(y)$ can be very expensive, and the precision required is unclear; third, an appropriate Hessian model is not obvious and must be obtained by ad-hoc heuristics or finite differences; and fourth, there has been no attempt at global convergence results. In contrast, our approach works on nonquadratic likelihoods; it provides the option of approximating $z_m(y)$ to any desired precision without danger of sacrificing convergence; one may use the same standard Hessian models used in full update methods, with exact derivatives if desired; and we prove a global convergence result for our method.
\begin{figure}
	\begin{center}
	\includegraphics[scale=0.8]{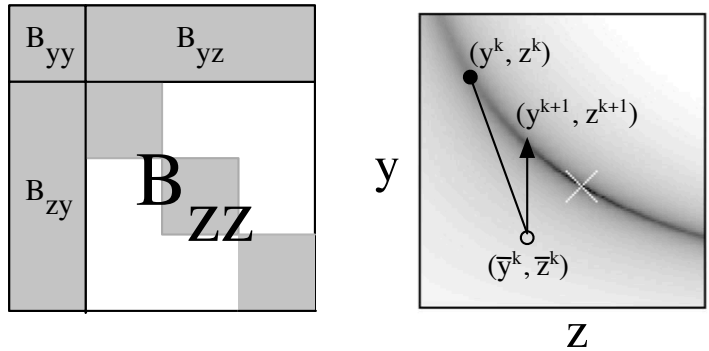}
	\end{center}
	\caption{Situations where block gaussian elimination and trial point adjustment may be useful.
	\emph{Left:} A `block arrow' matrix $B$ containing a block diagonal submatrix $B_{zz}$ is well-suited for inversion by block Gaussian
	elimination. This type of matrix arises in separable problems with multiple measurement vectors. 
	\emph{Right:} Graph of an objective $F(y,z)$ exhibiting a narrow, curved valley; the minimum is marked with an X. 
	Superimposed are a sample iterate $(y^k,z^k)$ and an initial trial point $(\bar{y}^k,\bar{z}^k)$ 
	that fails a sufficient decrease test. By adjusting this point's $z$ coordinate to the minimum of $F(\bar{y}^k,z)$, we obtain
	a new trial point $(y^{k+1},z^{k+1})$ that provides sufficient decrease to be accepted as an update.}
	\label{visual_mod}
\end{figure}

\section{Semi-reduced methods as a generalization of variable elimination} \label{global_conv}
In this section we show that a full update algorithm may be transformed into a reduced update (variable elimination) algorithm by introducing block Gaussian elimination and an optimal block trial point adjustment, then simplifying the resulting algorithm to remove unnecessary computation. Semi-reduced methods are those obtained halfway through this process, after the block techniques are imposed but before the simplification. We will describe the transformation process for unconstrained Newton-type line search algorithms, but it can be done with other types of algorithms too.  

\subsection{Semi-reduced methods and their simplification under optimal adjustment}
 We begin the move towards semi-reduced methods by defining a standard unconstrained line search algorithm, then adding trial point adjustment. Let $f(u)$ be a twice-differentiable function and $\Bfun(u) \in \reals^{N_u \times N_u}$ the \emph{Hessian model}, a positive definite matrix-valued function approximating $\nabla^2 f(u)$.
 
 Given an iterate $u^k$, we obtain the update $u^{k+1}$ by the following procedure. We begin by setting $g = \nabla f(u^k)$, $B = \Bfun(u^k)$, and determining the search direction $\Delta u$ by solving $B \Delta u = -g$. The search direction determines a line $u_p(s) = u^k + s \Delta u$ of potential trial points parametrized by step size $s$, and we set $u^{k+1}$ by choosing one that satisfies the \emph{sufficient decrease} condition
\beq
	f(u_p(s)) - f(u^k) \leq \delta g^\Trans (u_p(s) - u^k) = \delta g^\Trans( s \Delta u),
	\label{armijo}
\eeq
for a fixed $\delta \in (0,1/2)$. One can generally ensure convergence by picking a step size that obeys this condition and is not too small. Such a step size can be obtained by backtracking: we set $s = \alpha^j$ and try $j = 0,1,2,\ldots$ until \eqref{armijo} is satisfied. 

To incorporate trial point adjustment into this update procedure, we assume we are given an \emph{adjustment operator} $u_d(u)$ such that $f(u_d(u)) \leq f(u)$ for any input $u$. We then replace $u_p(s)$ with $u_d(u_p(s))$ on the left hand side of \eqref{armijo}, obtaining Alg.~\ref{mod_newt_method_ls}. (Note that the standard full update method may be recovered by setting $u_d(u) = u$.) Global convergence of Alg.~\ref{mod_newt_method_ls} to a stationary point is guaranteed by the following theorem: 
\begin{theorem} \label{global_conv_arm}
Assume that $f(u)$ is bounded below, $\nabla f(u)$ is Lipschitz continuous with bounded Lipschitz constant, and the matrices $\Bfun(u)$ are symmetric positive definite with eigenvalues bounded away from zero and infinity. Then
\beq
	\lim_{n \rightarrow \infty} \nabla f(u^k) = 0,
\eeq
and any limit point of $(u^k)_{k = 0}^\infty$ is a stationary point.
\end{theorem}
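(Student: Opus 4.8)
The plan is to run the classical Zoutendijk-type convergence argument for backtracking Armijo line search (as in \cite{NocedalWright:NumericalOptimization:2006, kelley1987iterative}), the only new ingredient being the bookkeeping needed to confirm that the trial-point adjustment operator $u_d$ does not spoil it. The crucial structural remark is that, since $f(u_d(u)) \le f(u)$ for every $u$, the adjusted sufficient-decrease test obtained by putting $u_d(u_p(s))$ in place of $u_p(s)$ in \eqref{armijo} has the \emph{same} right-hand side as the un-adjusted Armijo test but a \emph{smaller} left-hand side. Consequently, whenever the un-adjusted test holds so does the adjusted one, and, contrapositively, any step size that fails the adjusted test also fails the un-adjusted test. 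This is exactly what is needed to import the standard lower bound on the accepted step length, which is ordinarily derived for the un-adjusted test.

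First I would dispose of the degenerate case: if $\nabla f(u^k) = 0$ for some finite $k$, then $u^k$ is already stationary and the estimates below hold trivially, so assume $g \triangleq \nabla f(u^k) \ne 0$ throughout. Since $\Bfun(u^k)$ is positive definite, the search direction $\Delta u = -\Bfun(u^k)^{-1} g$ satisfies $g^\Trans \Delta u = -g^\Trans \Bfun(u^k)^{-1} g < 0$, so it is a genuine descent direction and the backtracking loop is well posed.

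Second, I would establish a lower bound on the accepted step size $s_k$. A standard computation using the Lipschitz estimate $f(u_p(s)) \le f(u^k) + s\, g^\Trans\Delta u + \tfrac{L}{2}s^2\norm{\Delta u}^2$ shows that the un-adjusted Armijo inequality holds for every $s \in (0, \bar s_k]$ with $\bar s_k = 2(1-\delta)\,|g^\Trans \Delta u| \,/\, (L\norm{\Delta u}^2)$; by the remark above the adjusted test then holds on the same interval, so backtracking terminates and the accepted step obeys $s_k \ge \min(1, \alpha\bar s_k)$. Combining this with the eigenvalue bounds $mI \preceq \Bfun(u) \preceq MI$ — which give $|g^\Trans\Delta u| = g^\Trans\Bfun(u^k)^{-1} g \ge \norm{g}^2 / M$ and $\norm{\Delta u} \le \norm{g}/m$ — and treating the branches $s_k = 1$ and $s_k \ge \alpha\bar s_k$ separately, yields a uniform estimate $s_k\,|g^\Trans\Delta u| \ge c\,\norm{\nabla f(u^k)}^2$ with $c = \min\{1/M,\; 2\alpha(1-\delta)m^2 / (LM^2)\} > 0$.

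Finally, the adjusted sufficient-decrease test gives $f(u^{k+1}) - f(u^k) = f(u_d(u_p(s_k))) - f(u^k) \le \delta s_k g^\Trans\Delta u = -\delta s_k |g^\Trans\Delta u| \le -\delta c\,\norm{\nabla f(u^k)}^2$. Telescoping over $k$ and using that $f$ is bounded below shows $\sum_k \norm{\nabla f(u^k)}^2 \le (f(u^0) - \inf f)/(\delta c) < \infty$, so $\nabla f(u^k) \to 0$. Continuity of $\nabla f$ (immediate from the Lipschitz hypothesis) then forces $\nabla f(u^\star) = 0$ at every limit point $u^\star$ of $(u^k)$. I expect the only place needing genuine care to be the step-size bound — verifying that the accepted step inherits the classical lower bound even though the new iterate $u^{k+1}$ is the adjusted point $u_d(u_p(s_k))$ rather than $u_p(s_k)$, and that both branches of $\min(1,\alpha\bar s_k)$ deliver a bound proportional to $\norm{\nabla f(u^k)}^2$ — while the remainder is routine.
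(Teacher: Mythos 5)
Your proof is correct, and it follows the same route the paper takes: the paper simply cites \cite{kelley1987iterative} for the standard backtracking argument and \cite{absil2009accelerated} for the adjusted version, with the informal remark that the adjustment can only enlarge the decrease, which is exactly the structural observation you make rigorous (same right-hand side, smaller left-hand side, so the classical step-length lower bound and the per-iteration decrease $f(u^{k+1})-f(u^k)\le-\delta c\norm{\nabla f(u^k)}^2$ survive). Your write-up supplies the details the paper delegates to those references, and the estimates (descent direction, $s_k\ge\min(1,\alpha\bar s_k)$, the constant $c$, telescoping) are all sound.
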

\noindent This theorem is proven in \cite{kelley1987iterative} for the standard algorithm without trial point adjustment, while the extension for algorithms including trial point adjustment is given in \cite{absil2009accelerated}. Informally, trial point adjustment does not harm convergence because convergence requires only that $f(u^k)$ decreases by some minimal amount for each iteration $k$, and the adjustment operator can only make the decrease larger. 

\begin{algorithm}%[H] 
\caption{Backtracking line search method with trial point adjustment}
\begin{algorithmic}[1]
	\REQUIRE $u^0 \in \reals^{N_u}$, $\delta \in (0,1/2)$, $\alpha \in (0,1)$ 	\FOR{$k = 0,1,2,\ldots$}
		\STATE $g = \nabla f(u^k)$, $B = \Bfun(u^k)$
		\STATE Solve for $\Delta u$: $B \Delta u = -g$
		\STATE $u_p(s) = u^k + s \Delta u$
		\STATE  Find the smallest $j \geq 0$ such that  $f(u_d(u_p(\alpha^j))) - f(u^k) \leq  \delta g^\Trans (u_p(\alpha^j) - u^k)$
		\STATE $u^{k+1} = u_d(u_p(\alpha^j))$
	\ENDFOR
\end{algorithmic}
\label{mod_newt_method_ls}
\end{algorithm}

To make Alg.~\ref{mod_newt_method_ls} into a semi-reduced method for minimizing a function $F(x) = F(y,z)$, we set $f(u) = F(x)$ and put system $B \Delta x = -g$ into the block Gaussian decomposed form \eqref{schur_system}. We then require the trial point adjustment to have the form $x_d(y,z) = \left( y, z_d(y,z) \right)$, so that only $z$ can change. The result of these changes is Alg.~\ref{semi_reduced_ls}.

\begin{algorithm}%[H] 
\caption{Semi-reduced line search method.}
\begin{algorithmic}[1]
	\REQUIRE $x^0 = (y^0,z^0) \in \reals^N$, $\delta \in (0,1/2)$, $\alpha \in (0,1)$
	\STATE Define $x_d(y,z) = (y, z_d(y))$
	\FOR{$k = 0,1,2,\ldots$}
		\STATE $g = \nabla F(x^k),  B = \Bfun(x^k)$
		\STATE Solve for $\Delta y$: $B_s \Delta y = -g_y + B_{yz} B_{zz}^{-1} g_z$
		\STATE Solve for $\Delta z$: $B_{zz} \Delta z = -g_z - B_{zy} \Delta y$
		\STATE Define $x_p(s) = (y_p(s),z_p(s)) = (y^k + s \Delta y, z^k + s \Delta z)$
		\STATE  Find the smallest $j \geq 0$ such that 
			$F(x_d(x_p(\alpha^j))) - F(x^k) \leq  \delta g^\Trans (x_p(\alpha^j) - x^k)$
		\STATE $x^{k+1} = x_d(x_p(\alpha^j))$
	\ENDFOR
\end{algorithmic}
\label{semi_reduced_ls}
\end{algorithm}

To make Alg.~\ref{semi_reduced_ls} into a reduced update method, we assume our trial point adjustment is unique and optimal, $z_d(y,z) = z_m(y) = \argmin_z F(y,z)$, and exploit this fact to simplify the algorithm. Optimal adjustments ensure that $g_z = \nabla_z F(y^k,z_m(y^k)) = 0$ for all $k$, so terms involving $g_z$ disappear. In particular, line 7 reduces to $g^\Trans (x_p(\alpha^j) - x^k) = g_y^\Trans (y_p(\alpha^j) - y^k)$. After terms involving $g_z$ are removed, the trial point $z_p(s) = z^k + s \Delta z$ appears only within the expression $x_d(x_p(\alpha^j))$. But if we write out $x_d(x_p(s)) = (y^k + s\Delta y, z_m(y^k + s\Delta y))$, we see that $z^k + s \Delta z$ has been supplanted by the adjusted point $z_m(y^k + s\Delta y)$, so we may skip it by redefining $x_p$ as $x_p(s) = (y^k + s \Delta y, z_m(y^k + s \Delta y))$. The disappearance of $z^k + s \Delta z$ renders the step $\Delta z$ unused in any way, so line 5 can be deleted. What is left is Alg.~\ref{mod_ls_method_exact_inner}, a \emph{simplified semi-reduced method}. In the next section we show that, when $B$ is chosen appropriately, versions of this simplified semi-reduced method are identical to several reduced (variable elimination) methods in the literature.

\begin{algorithm}%[H] 
\caption{Simplified semi-reduced line search method.}
\begin{algorithmic}[1]
	\REQUIRE $x^0 = (y^0,z_m(y^0)) \in \reals^N$, $\delta \in (0,1/2)$, $\alpha \in (0,1)$ 
	\FOR{$k = 0,1,2,\ldots$}
		\STATE $g_y = \nabla_y F(x^k),  B = \Bfun(x^k)$
		\STATE Solve $B_s \Delta y = -g_y$
		\STATE Define $x_p(s) = (y_p(s),z_p(s)) =  (y^k + s \Delta y, z_m(y^k + s \Delta y))$
		\STATE  Find the smallest $j \geq 0$ such that 
			$F(x_p(\alpha^j)) - F(x^k) \leq  \delta g_y^\Trans (y_p(\alpha^j) - y^k)$
		\STATE $x^{k+1} = x_p(\alpha^j)$
	\ENDFOR
\end{algorithmic}
\label{mod_ls_method_exact_inner}
\end{algorithm}

Reinterpreting variable elimination as a simplified semi-reduced method allows us to precisely articulate the cost-benefit tradeoff involved in using variable elimination, as well as the \emph{raison d'\^{e}tre} for \emph{non-}simplified semi-reduced methods. The benefit of variable elimination is that we need not compute $g_z$, $\Delta z$, or quantities dependent on them, and the trial point adjustments may cause the algorithm to converge faster. The cost is that we must compute the \emph{optimal} $z$ value after every $y$ update, while in semi-reduced updates we only require that the adjustment does not increase the objective. Variable elimination is preferable only if the adjustment subproblem can be solved quite quickly and yields a significantly increased convergence rate. While this condition often holds in unconstrained least squares problems, in general calculating $\argmin_z F(y,z)$ is often quite costly and may not be worth the trouble. Semi-reduced methods permit us to forgo this cost, granting increased flexibility without compromising convergence.

\subsection{Variable elimination as a simplified semi-reduced method} \label{varElimSemiRed}
Here we show that three popular reduced (variable elimination) methods can all be interpreted as simplified semi-reduced methods with an appropriate Hessian model. In other words, reduced methods can be obtained by operations on $F(y,z)$ alone, without ever forming the objective $F_r(y)$ explicitly. This surprising result is essentially due to the implicit function theorem and the fact that optimization methods only use very limited local information about a function to determine iterates. We begin with a new lemma stating the exact condition required for a reduced and a simplified semi-reduced method to be equivalent. 

\begin{lemma} \label{nd_newton_is_reduced_newton}
Let $y^0 \in \reals^{N_y}$ be given, and let $z^0 = z_m(y^0)$. Let invertible Hessian models $\Bfun_r(y)$ and $\Bfun_f(y,z)$ for $F_r(y)$ and $F(y,z)$ be given. Assume that $z_m(y)$ is well-defined: that is, there is a unique solution of $\min_z F(y,z)$ for any given $y$. Consider the following pair of Newton-type algorithms:
\begin{enumerate}
\item \emph{Reduced method:}
Alg.~\ref{mod_newt_method_ls} with $f(u) = F_r(y)$, $y_d(y) = y$, $\Bfun = \Bfun_r$.
\item \emph{Simplified semi-reduced method:}
Alg.~\ref{mod_ls_method_exact_inner} with $\Bfun = \Bfun_f$.
\end{enumerate}
Let $B_s = B_{yy} - B_{yz} B_{zz}^{-1} B_{zy}$. These two algorithms generate identical iterates if and only if, at all points $y^k$ visited by each algorithm, the Hessian models $B_r = \Bfun_r(y)$ and $B = \Bfun_f(y,z_m(y))$ obey
\beq
	B_r = B_s.
	\label{schur_rel}
\eeq
\end{lemma}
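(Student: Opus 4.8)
The plan is to recognize that, once the inputs of the two methods are matched up, Alg.~\ref{mod_newt_method_ls} applied to $F_r$ and Alg.~\ref{mod_ls_method_exact_inner} applied to $F$ are \emph{the same iteration line by line}, and the only place they can possibly differ is the linear solve for $\Delta y$. So I would first record the two identities that make this comparison legitimate. The first is purely definitional: $F_r(y) = F(y, z_m(y))$, so that the quantity $F(x_p(s)) = F\!\left(y^k + s\Delta y,\, z_m(y^k + s\Delta y)\right)$ evaluated along the search path in Alg.~\ref{mod_ls_method_exact_inner} is literally $F_r(y^k + s\Delta y)$, the value the reduced method evaluates along its own line. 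The second is the gradient identity $\nabla F_r(y) = \nabla_y F(y, z_m(y))$, i.e.\ the envelope theorem: differentiating $F_r(y) = F(y, z_m(y))$ and using the unconstrained first-order optimality condition $\nabla_z F(y, z_m(y)) = 0$ annihilates the $Dz_m(y)^\Trans \nabla_z F$ term. I expect this identity, and more precisely the regularity it rests on, to be the main technical point: the lemma only assumes $z_m$ is well-defined, whereas for $F_r$ to carry a gradient and a Hessian model one needs the inner minimization to be nondegenerate, so that the implicit function theorem makes $z_m$ of class $C^1$ and the envelope formula applies; I would either adopt this as a standing hypothesis or derive it from positive definiteness of $\nabla_{zz}^2 F$ at $z_m(y)$.

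Granting the two identities, I would prove the ``if'' direction by induction on $k$, simultaneously showing that the iterate sequences coincide and that the hypothesis $B_r = B_s$ is being invoked only at points $y^k$ visited by both. The base case is immediate since both start at $y^0$, with the reduced method implicitly carrying $z^0 = z_m(y^0)$. For the inductive step, at the common iterate $y^k$ the two methods form the same gradient $g_y = \nabla_y F(y^k, z_m(y^k)) = \nabla F_r(y^k) =: g$; under $B_r = B_s$ the solves $B_r \Delta y = -g$ and $B_s \Delta y = -g_y$ return the same $\Delta y$; and the two backtracking tests then coincide, because by the first identity the decreases $F_r(y^k + \alpha^j \Delta y) - F_r(y^k)$ and $F(x_p(\alpha^j)) - F(x^k)$ are equal, and by the second the Armijo right-hand sides $\delta g^\Trans(\alpha^j \Delta y)$ and $\delta g_y^\Trans(\alpha^j \Delta y)$ are equal. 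Hence the same smallest $j$ is accepted and $y^{k+1}$ agrees; the semi-reduced method additionally sets $z^{k+1} = z_m(y^{k+1})$, preserving the induction hypothesis.

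For the ``only if'' direction I would run this in reverse: assuming the iterate sequences coincide, at each visited $y^k$ the gradients already agree, and --- again by the two identities --- the objective-along-path and the Armijo right-hand side are the \emph{same} functions of the search direction, so the only remaining degree of freedom is $\Delta y$ itself. Since $y^{k+1} - y^k$ is a positive multiple of $\Delta y$, matching iterates force the two search directions, hence $B_r^{-1} g$ and $B_s^{-1} g$, to coincide at every visited point, whence $B_r = B_s$ there. The delicate point in this direction is excluding the degenerate scenario in which the two line searches land on the same point from genuinely different but \emph{parallel} directions with compensating backtracking exponents (and, relatedly, the trivial case $g = 0$); I would rule this out by a short argument that such a coincidence cannot persist along the whole sequence, or by invoking a mild nondegeneracy condition. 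Apart from this point and the regularity issue behind the envelope theorem, the proof is bookkeeping.
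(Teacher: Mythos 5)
Your core argument is the same as the paper's: after the inputs are matched, Algs.~\ref{mod_newt_method_ls} and \ref{mod_ls_method_exact_inner} are line-by-line identical except for the gradient and the linear solve, and the envelope identity $\nabla F_r(y)=\nabla_y F(y,z_m(y))$ (from the stationarity $\nabla_z F(y,z_m(y))=0$), together with the definitional identity $F_r(y)=F(y,z_m(y))$, makes the objective values along the path and the Armijo right-hand sides coincide, so the accepted step sizes and iterates agree as soon as the solves do. This is exactly the paper's proof; your induction over $k$ is the bookkeeping it leaves implicit, and your caveat that the chain-rule step needs $z_m$ differentiable is a regularity point the paper shares, since its proof also writes $Dz_m$.

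The one place your writeup overreaches is the ``only if'' direction. From identical iterates you can conclude only that the two computed steps agree, i.e.\ $B_r\Delta y=-g=B_s\Delta y$, hence $(B_r-B_s)\Delta y=0$: agreement of two invertible matrices on the single vector $\Delta y$ (equivalently, of $B_r^{-1}g$ and $B_s^{-1}g$ on the single gradient $g$) does not force $B_r=B_s$, so ``whence $B_r=B_s$ there'' is a non sequitur, and the degeneracies you flag (parallel directions with compensating backtracking exponents, $g=0$) are not the real obstruction. The paper does not supply a stronger argument either; its proof establishes the gradient identity and observes that the linear solve is the only remaining difference, so \eqref{schur_rel} is read as ``the two algorithms perform the identical computation,'' and only the sufficiency direction is actually used later (Propositions \ref{red_newt_is_mod_newt}, \ref{kf_gen_equiv}, \ref{gp_gen_equiv}). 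If you want a literally correct necessity statement, either weaken the conclusion to $B_r\Delta y=B_s\Delta y$ at the visited points, or phrase necessity as holding when identical iterates are demanded for every admissible objective/gradient, not just along one trajectory.
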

\begin{proof}
After the specified substitutions are made, Algs.~\ref{mod_newt_method_ls} and \ref{mod_ls_method_exact_inner} have exactly one difference: the gradient used in Alg.~\ref{mod_newt_method_ls} is $\nabla F_r(y)$, while in Alg.~\ref{mod_ls_method_exact_inner} it is $\nabla_y F(y,z)$. Thus it suffices to show that $\nabla F_r(y) = \nabla_y F(y,z)$. Letting $Dz_m$ denote the Jacobian of $z_m(y)$, we have
\begin{equation} 
	\nabla F_r(y) = \nabla_y F(y,z_m(y)) + Dz_m \cdot \nabla_z F(y,z_m(y)) = \nabla_y F(y,z_m(y)),
	\label{reduced_gradient}
\end{equation}
where the second term has vanished because $z_m(y)$ is a stationary point of $F(y,z)$, so $\nabla_z F(y,z_m(y)) = 0$. 
\end{proof}

Now we show that the reduced Newton method (i.e.~Newton's method on $F_r(y)$) can be interpreted as a simplified semi-reduced Newton method on $F(y,z)$. This was implicitly shown by Richards \cite{richards1961mle} for the classical, nonglobalized Newton iteration. 

\begin{proposition} \label{red_newt_is_mod_newt}
Under the assumptions of Lemma \ref{nd_newton_is_reduced_newton}, the reduced method Alg.~\ref{mod_newt_method_ls} with Hessian model $B_r = \nabla^2 F_r$ is equivalent to the simplified semi-reduced method Alg.~\ref{mod_ls_method_exact_inner} with model $B = \nabla^2 F$.

\end{proposition}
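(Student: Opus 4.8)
The plan is to reduce the proposition to a single calculus identity by invoking Lemma~\ref{nd_newton_is_reduced_newton}. That lemma tells us that the reduced method and the simplified semi-reduced method generate identical iterates if and only if, at every visited point, $B_r = B_s$, where $B_s = B_{yy} - B_{yz}B_{zz}^{-1}B_{zy}$ is the Schur complement of $B_{zz}$ in $B$. With the Hessian models specified here we have $B_r = \nabla^2 F_r(y)$ and $B = \nabla^2 F(y,z_m(y))$, so that $B_{yy} = \nabla^2_{yy}F$, $B_{yz} = \nabla^2_{yz}F$, $B_{zz} = \nabla^2_{zz}F$, and $B_{zy} = \nabla^2_{zy}F$, all evaluated at $(y,z_m(y))$. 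Thus the entire proposition comes down to the classical fact that the Hessian of a partially minimized function is the Schur complement of the full Hessian:
\[
	\nabla^2 F_r(y) = \nabla^2_{yy}F - \nabla^2_{yz}F\,\bigl(\nabla^2_{zz}F\bigr)^{-1}\nabla^2_{zy}F ,
\]
with the right-hand side evaluated at $(y,z_m(y))$.

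To establish this identity I would start from two facts already on hand. First, by \eqref{reduced_gradient} in the proof of Lemma~\ref{nd_newton_is_reduced_newton}, $\nabla F_r(y) = \nabla_y F(y,z_m(y))$. Second, the defining property of $z_m$ gives the stationarity identity $\nabla_z F(y,z_m(y)) = 0$ for all $y$ near each visited point. Differentiating the stationarity identity in $y$ and applying the chain rule yields $\nabla^2_{zy}F + \nabla^2_{zz}F \cdot Dz_m = 0$, hence $Dz_m = -\bigl(\nabla^2_{zz}F\bigr)^{-1}\nabla^2_{zy}F$, the invertibility of $\nabla^2_{zz}F$ being part of the standing hypotheses (since $\nabla^2 F$ serves as an invertible Hessian model, equivalently the Schur complement is assumed well-defined). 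Differentiating $\nabla F_r(y) = \nabla_y F(y,z_m(y))$ once more, again by the chain rule, gives $\nabla^2 F_r(y) = \nabla^2_{yy}F + \nabla^2_{yz}F \cdot Dz_m$; substituting the expression for $Dz_m$ produces exactly the Schur complement, so $B_r = B_s$, and Lemma~\ref{nd_newton_is_reduced_newton} then delivers the claimed equivalence.

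The main obstacle is not the algebra — which is two chain-rule differentiations and one substitution — but justifying that $z_m(y)$ is smooth enough for those manipulations to be legitimate, and tracking the evaluation point. I would handle the smoothness by the implicit function theorem: since $\nabla_z F$ is $C^1$ and its $z$-Jacobian $\nabla^2_{zz}F$ is invertible at $(y,z_m(y))$, the equation $\nabla_z F(y,z) = 0$ locally defines a $C^1$ map $y \mapsto z_m(y)$, which both legitimizes the Jacobian $Dz_m$ used implicitly in Lemma~\ref{nd_newton_is_reduced_newton} and licenses differentiating the two identities above. I would also explicitly note that every second derivative of $F$ appearing in $B_s$ is evaluated at $(y,z_m(y))$, matching the evaluation convention of Lemma~\ref{nd_newton_is_reduced_newton}; with that bookkeeping settled, the proof is complete.
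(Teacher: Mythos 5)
Your proposal is correct and follows essentially the same route as the paper: invoke Lemma~\ref{nd_newton_is_reduced_newton}, then verify $B_r = B_s$ by differentiating $\nabla F_r(y) = \nabla_y F(y,z_m(y))$ and obtaining $Dz_m = -[\nabla^2_{zz}F]^{-1}\nabla^2_{zy}F$ from implicit differentiation of the stationarity condition. Your added remarks on smoothness of $z_m$ via the implicit function theorem are a welcome but inessential refinement of the paper's argument.
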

\begin{proof}
We need only verify the Schur complement relation \eqref{schur_rel}. Differentiating \eqref{reduced_gradient}, we have
\begin{equation} 
	\nabla^2 F_r = \nabla^2_{yy} F + \nabla^2_{yz} F \cdot Dz_m.
	\label{reduced_hessian}
\end{equation}
$Dz_m$ can be obtained by implicit differentiation of the stationary point condition $\nabla_z F(y,z_m(y)) = 0$:
\begin{align}
	&\nabla^2_{zy} F(y,z_m(y)) + \nabla^2_{zz} F(y,z_m(y)) \cdot Dz_m = 0 \\
	&Dz_m = - [\nabla^2_{zz} F]^{-1} \nabla^2_{zy} F.
	 \label{jacobiaN_zm}
\end{align}
Plugging this expression into \eqref{reduced_hessian} and setting $B_r = \nabla^2 F_r$ and $B = \nabla^2 F$ yields \eqref{schur_rel} as desired.
\end{proof}

Now consider the separable case, where $F(y,z) = L(A(y)z)$, but $L(\mu)$ is not necessarily a least squares functional. We derive two simplified semi-reduced methods for this objective. In the least squares case, these methods are equivalent to the Kaufman \cite{Kaufman:1975} and Golub-Pereyra \cite{golub1973differentiation,Golub:separablenonlinear:2002} variants of variable projection, but they also apply to general nonquadratic $L$, a case for which no reduced method existed before.
To derive our methods, we note that the variable projection model Hessians $B_r$ have a \emph{closed-form normal decomposition}: they can be written as $B_r = X_r^\Trans X_r$ for some explicit $X_r$. Accordingly we will seek Hessian models $B$ such that $B_s = X_s^\Trans X_s$ for some closed-form $X_s$. 

We set some notation and conventions before we begin. Let $X_{:,j}$ the $j^{th}$ column of a matrix $X$. For any full column rank matrix $X$, $X^\pinv = (X^\Trans X)^{-1} X^\Trans$ is the Moore-Penrose psuedoinverse and $P_{X}^\perp = I - X X^\pinv$ is the orthogonal projector onto $\text{range}(X)^\perp$. Given a function $f(u,v)$ let $Df = [\partial_u f, \partial_v f]$ denote its Jacobian. To simplify our formulas we define the quantities $\mu(y,z) = A(y)z$, $W = (\nabla^2 L)_\mu^{1/2}$, and $\bar{A} = WA$. We abuse notation by ignoring the implicit dependence of $W$ on $y$ and $z$, which allows us to write $W \partial_{y_j} A$ as $\partial_{y_j} \bar{A}$.

We begin by decomposing the full Hessian of $F$ into two components: $\nabla^2 F = G + E$. The $G$ term is the Gauss-Newton Hessian model, $G = J^\Trans J$, where $J = W(D\mu)$. The blocks of $J$ are given by
\begin{equation}
	 (J_y)_{:,j} = (\partial_{y_j} \bar{A}) z  \,\,\, \text{for $j = 1,\ldots,N_y$}, \qquad J_z = \bar{A}.
	\label{sep_jac}
\end{equation}
The $E$ component is a residual term given by $E = \sum_i (\nabla L)_i \nabla^2 \mu_i$. Note that $E_{zz} = 0$ because $\nabla_{zz}^2 \mu_i = 0$ for all $i$. 

The first Hessian model we consider will be $G$. A closed-form normal decomposition for $G_s$ can be derived by:
\begin{equation}
	G_s 
		= J_y^\Trans (I - \bar{A} \bar{A}^\pinv ) J_y 
		= J_y^\Trans P_{\bar{A}}^\perp J_y  = (-P_{\bar{A}}^\perp J_y)^\Trans (-P_{\bar{A}}^\perp J_y) = J_s^\Trans J_s,
	\label{gn_schur}
\end{equation}
where the last line uses the fact that orthogonal projection is symmetric and idempotent, and the minus sign has been introduced for consistency with the variable projection convention. By Lemma \ref{nd_newton_is_reduced_newton} this result yields a pair of equivalent reduced and simplified semi-reduced methods for any $L(\mu)$:
\begin{proposition} The reduced method Alg.~\ref{mod_newt_method_ls} with Hessian model $B_r = G_s$ is equivalent to the simplified semi-reduced method Alg.~\ref{mod_ls_method_exact_inner} with model $B = G$.
\label{kf_gen_equiv}
\end{proposition}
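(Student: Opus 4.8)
The plan is to invoke Lemma~\ref{nd_newton_is_reduced_newton} directly, exactly as in the proof of Proposition~\ref{red_newt_is_mod_newt}. We place ourselves in the setting of that lemma with the full-problem Hessian model $\Bfun_f(y,z) = G(y,z)$, the Gauss--Newton model of $F$, and the reduced-problem model $\Bfun_r(y) = G_s$, the matrix defined in \eqref{gn_schur} evaluated along $z = z_m(y)$. The lemma then asserts that Alg.~\ref{mod_ls_method_exact_inner} with $\Bfun = G$ and the reduced Alg.~\ref{mod_newt_method_ls} with $\Bfun = G_s$ generate identical iterates if and only if the Schur complement relation \eqref{schur_rel}, namely $B_r = B_s$ with $B_s = B_{yy} - B_{yz}B_{zz}^{-1}B_{zy}$ and $B = \Bfun_f(y, z_m(y))$, holds at every visited point. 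So the proof reduces to verifying this identity together with the regularity hypotheses of the lemma.

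Verifying \eqref{schur_rel} is immediate here, unlike in Proposition~\ref{red_newt_is_mod_newt} where it required implicit differentiation of the stationarity condition. Reading off the blocks of $G = J^\Trans J$ from \eqref{sep_jac} gives $G_{yy} = J_y^\Trans J_y$, $G_{yz} = J_y^\Trans \bar{A}$, $G_{zy} = \bar{A}^\Trans J_y$, and $G_{zz} = \bar{A}^\Trans \bar{A}$, so the Schur complement of $G_{zz}$ in $G$ is
\beq
	G_{yy} - G_{yz}G_{zz}^{-1}G_{zy} = J_y^\Trans J_y - J_y^\Trans \bar{A}(\bar{A}^\Trans\bar{A})^{-1}\bar{A}^\Trans J_y = J_y^\Trans P_{\bar{A}}^\perp J_y,
\eeq
which is exactly the quantity $G_s$ of \eqref{gn_schur}. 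Hence $B_s = G_s = B_r$ as matrix-valued functions of $(y,z_m(y))$, so \eqref{schur_rel} holds automatically at every iterate visited by either method, with no computation of $Dz_m$ required.

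It then remains only to check the standing hypotheses of Lemma~\ref{nd_newton_is_reduced_newton}: that $z_m(y)$ is well-defined, that $z^0 = z_m(y^0)$, and that the two Hessian models are invertible. The first two are assumed, and the third is the one point deserving care, since $G$ and its Schur complement must serve as bona fide positive-definite Hessian models for Alg.~\ref{mod_ls_method_exact_inner} and Alg.~\ref{mod_newt_method_ls} respectively. It suffices to impose the usual full-column-rank condition that makes Gauss--Newton and variable projection well-posed: if $J = W(D\mu)$ has full column rank, then $G = J^\Trans J$ is symmetric positive definite, hence $G_{zz} = \bar{A}^\Trans\bar{A}$ is invertible and its Schur complement $G_s = J_s^\Trans J_s$, with $J_s = -P_{\bar{A}}^\perp J_y$, is symmetric positive definite as well, so all three qualify as legitimate Hessian models. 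Under this genericity assumption the hypotheses of Lemma~\ref{nd_newton_is_reduced_newton} are met and the asserted equivalence follows; there is no real obstacle beyond making this rank condition explicit, and it is already implicit in the classical variable projection setting.
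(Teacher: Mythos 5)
Your proposal is correct and follows essentially the same route as the paper: the paper's proof of Proposition~\ref{kf_gen_equiv} is precisely the computation in \eqref{gn_schur} showing the Schur complement of $G_{zz}$ in $G$ equals $G_s = J_s^\Trans J_s$, followed by an appeal to Lemma~\ref{nd_newton_is_reduced_newton}, so the relation \eqref{schur_rel} holds by construction with no need for $Dz_m$. Your extra remark making the full-column-rank (hence positive-definiteness/invertibility) condition explicit is a reasonable clarification of hypotheses the paper leaves implicit in Lemma~\ref{nd_newton_is_reduced_newton}, not a different argument.
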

\noindent In the least squares case we have $z = z_m(y) = A^\pinv b$, so $J_s = -P_{A}^\perp (\partial_{y_j} A) z = -P_{A}^\perp (\partial_{y_j} A) A^\pinv b$, and this $J_s$ is precisely the reduced Jacobian $J_r$ proposed by Kaufman. Thus we have $G_s = G_r$ and the following result, which was proven by Ruhe and Wedin in \cite{ruhe1980algorithms} for algorithms without globalization:
\begin{corollary}
 Kaufman's variable projection method is equivalent to a simplified semi-reduced method for separable least squares using $B = G$.
\label{kf_lsq_equiv}
\end{corollary}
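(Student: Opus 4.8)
The plan is to obtain the corollary as a direct specialization of Proposition~\ref{kf_gen_equiv} to the least squares functional; the only extra work is to check that the reduced method produced by that specialization is literally Kaufman's variable projection method.

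First I would set $L(\mu) = \frac{1}{2}\norm{\mu - b}^2$, so that $\nabla^2 L = I$ everywhere; hence $W = I$ and $\bar{A} = A$, and the Gauss-Newton model $G = J^\Trans J$ of \eqref{sep_jac} reduces to the usual separable least squares Gauss-Newton model with $(J_y)_{:,j} = (\partial_{y_j} A) z$ and $J_z = A$. Since the inner minimization $\min_z L(A(y)z)$ is itself an ordinary linear least squares problem, it has the unique solution $z_m(y) = A(y)^\pinv b$ (assuming $A(y)$ has full column rank along the iteration), so the hypothesis of Lemma~\ref{nd_newton_is_reduced_newton} is met. Substituting $z = z_m(y)$ into the closed-form normal decomposition \eqref{gn_schur} then gives $G_s = J_s^\Trans J_s$ with $(J_s)_{:,j} = -P_A^\perp (\partial_{y_j} A) A^\pinv b$.

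Next I would recall the definition of Kaufman's method \cite{Kaufman:1975}: it applies a Gauss-Newton line search to the reduced objective $F_r(y) = \frac{1}{2}\norm{P_A^\perp b}^2$ using the Hessian model $G_r = J_r^\Trans J_r$, where $J_r$ is Kaufman's reduced Jacobian, obtained from the Golub-Pereyra Jacobian by discarding one of its two terms, and whose $j$th column is exactly $-P_A^\perp (\partial_{y_j} A) A^\pinv b$. Comparing columns, $J_s = J_r$, hence $G_s = G_r$, which is precisely the Schur complement relation \eqref{schur_rel} between $B_r = G_r$ and $B = G$ required by Lemma~\ref{nd_newton_is_reduced_newton} (and already established in the proof of Proposition~\ref{kf_gen_equiv}). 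Therefore the reduced method Alg.~\ref{mod_newt_method_ls} with $f(u) = F_r(y)$ and $B_r = G_r$ --- that is, Kaufman's variable projection method --- generates the same iterates as the simplified semi-reduced method Alg.~\ref{mod_ls_method_exact_inner} with $B = G$, which is the assertion.

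The only genuine subtlety, and the step I would treat most carefully, is the column-by-column identification $J_s = J_r$: one must use the Kaufman variant of the reduced Jacobian (not the full Golub-Pereyra Jacobian, which carries the extra residual-dependent term and corresponds to a different Hessian model, handled separately), and match signs against the convention fixed in \eqref{gn_schur}. Everything else --- in particular the equivalence of the two line searches under the relation \eqref{schur_rel}, including the globalization that Ruhe and Wedin's original statement omitted --- is inherited from Lemma~\ref{nd_newton_is_reduced_newton} and Proposition~\ref{kf_gen_equiv} and requires no new argument.
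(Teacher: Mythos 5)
Your proposal is correct and follows essentially the same route as the paper: specialize $W = \nabla^2 L = I$ so $\bar{A} = A$ and $z_m(y) = A^\pinv b$, observe that the column formula for $J_s$ from \eqref{gn_schur} coincides with Kaufman's reduced Jacobian $J_r$, conclude $G_s = G_r$, and invoke Lemma~\ref{nd_newton_is_reduced_newton} via Proposition~\ref{kf_gen_equiv}. Your added remarks on the full-column-rank assumption and on distinguishing Kaufman's Jacobian from the Golub--Pereyra one are consistent with the paper and introduce no discrepancy.
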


Next we express the Golub-Pereyra variable projection method as a simplified semi-reduced method. To do this we need a Hessian model $H$ such that $H_s = K_s^\Trans K_s$, where $K_s$ is equal to the Golub-Pereyra reduced Jacobian $K_r$. This is a challenging problem because the Golub-Pereyra model $H_r = K_r^\Trans K_r$ is a closer approximation to $\nabla^2 F_r$ than the Kaufman model $G_r$, but there is no obvious normally decomposable $H$ that approximates $\nabla^2 F$ better than the traditional Gauss-Newton model $G$. 

Fortunately the model may be derived by an ingenious technique due to Ruhe and Wedin. Essentially, their idea is to apply a block Cholesky factorization to $\nabla^2 F$ and use the factors to help reduce the discrepancy between $G$ and $\nabla^2 F$. In our notation the Cholesky factorization used is the $UDU^\Trans$ factorization, which is simply the more familiar $LDL^\Trans$ factorization \cite{NocedalWright:NumericalOptimization:2006} with the conventional variable order reversed. Given a matrix $X$, we write its $UDU^\Trans$ factorization as $X = U \hat{X} U^\Trans$, where
\beq
	\hat{X} = 
	\begin{bmatrix}
		X_s	& 0		\\
		0	& X_{zz}	\\
	\end{bmatrix},
	\quad
	U = 
	\begin{bmatrix}
		I 	& X_{yz} X_{zz}^{-1} 	\\
		0	& I 				\\
	\end{bmatrix}
	\label{udu}
\eeq
and $X_s = X_{yy} - X_{yz} X_{zz}^{-1} X_{zy}$. Note that the $U$ factor is determined uniquely by its $yz$ block. Setting $X = \nabla^2 F$ we have $U_{yz} = (G_{yz} + E_{yz})G_{zz}^{-1}$.

To derive $H$, consider the product $U^{-1} G U^{-\Trans}$, which is positive definite and normally decomposable because $G$ is. If $G$ were the true Hessian $U^{-1} G U^{-\Trans}$ would be block diagonal, but in reality
\beq
	U^{-1} G U^{-\Trans} = 
	\begin{bmatrix}
		G_s + E_{yz} G_{zz}^{-1} E_{zy} 	&	-E_{yz} \\
		-E_{zy}						&	G_{zz}
	\end{bmatrix}.
	\label{ugu}
\eeq
Letting $\hat{H}$ denote the diagonal of $U^{-1} G U^{-\Trans}$, we can define a new positive definite and normally decomposable Hessian model by setting $H \triangleq U \hat{H} U^\Trans$. From \eqref{udu} it immediately follows that
\beq
	H_s = \hat{H}_{yy} = G_s + E_{yz} G_{zz}^{-1} E_{zy}.
\eeq

Now we express $H_s$ in the form $H_s = K_s^\Trans K_s$. We have already decomposed $G_s = J_s^\Trans J_s$; a similar formula for the second term, $E_{yz} G_{zz}^{-1} E_{zy}$, is given by
\beq
	E_{yz} G_{zz}^{-1} E_{zy} 
		= E_{yz} (\bar{A}^\Trans \bar{A})^{-1} E_{zy} 
		= [(\bar{A}^\pinv)^\Trans E_{zy}]^\Trans [(\bar{A}^\pinv)^\Trans E_{zy}] = M^\Trans M,
\eeq
where we have used the identity $(X^\Trans X)^{-1} = X^\pinv (X^\pinv)^\Trans$ valid for any matrix $X$ with full column rank. We now have $H_s = J_s^\Trans J_s + M^\Trans M$, where $J_s = -P_{\bar{A}}^\perp J_y$ and $M = (\bar{A}^\pinv)^\Trans E_{zy}$. Surprisingly we may rewrite this as $H_s = (J_s + M)^\Trans (J_s+M)$ because the cross terms vanish: $J_s^\Trans M = -J_y^\Trans P_{\bar{A}}^\perp (\bar{A}^\pinv)^\Trans E_{zy} = 0$ for $P_{X}^\perp (X^\pinv)^\Trans = 0$. Therefore, by setting $K_s = J_s + M$, we have $H_s = K_s^\Trans K_s$ as desired.

All that remains is to compute $K_s$, which we do column-by-column. The $j^{th}$ column of $J_s$ is
	$(J_s)_{:,j} = (-P_{\bar{A}}^\perp J_y)_{:,j} = -P_{\bar{A}}^\perp (\partial_{y_j} \bar{A}) z,$ 
while the $j^{th}$ column of $E_{zy}$ is given elementwise by
\begin{equation}
	(E_{zy})_{kj}
		= \sum_i (\nabla L)_i \partial_{z_k} \partial_{y_j} (A z)_i 
		= \sum_i (\nabla L)_i (\partial_{y_k} A)_{ik} 
		= [(\partial_{y_j} A)^\Trans \nabla L]_k,
\end{equation}
so we have $M_{:,j} = (\bar{A}^\pinv)^\Trans(\partial_{y_j} A)^\Trans \nabla L$. We write this in terms of $\bar{A}$ by defining the weighted residual $r = W^{-1} \nabla L$, so that $M_{:,j} = (\bar{A}^\pinv)^\Trans(\partial_{y_j} \bar{A})^\Trans r$. Thus the desired formula for $K_s$'s columns is
\beq
	(K_s)_{:,j} = (J_s)_{:,j} + M_{:,j} =  -P_{\bar{A}}^\perp (\partial_{y_j} \bar{A}) z + (\bar{A}^\pinv)^\Trans(\partial_{y_j} \bar{A})^\Trans r.
	\label{gp_jacobian}
\eeq
Again invoking Lemma \ref{nd_newton_is_reduced_newton}, we have shown that
\begin{proposition} The reduced method Alg.~\ref{mod_newt_method_ls} with Hessian model $B_r = H_s$ is equivalent to the simplified semi-reduced method Alg.~\ref{mod_ls_method_exact_inner} with model $B = H$.
\label{gp_gen_equiv}
\end{proposition}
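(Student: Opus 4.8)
The plan is to reduce everything to Lemma~\ref{nd_newton_is_reduced_newton}, applied with reduced model $\Bfun_r(y) = H_s$ and full model $\Bfun_f(y,z) = H$. That lemma tells us the reduced method of Alg.~\ref{mod_newt_method_ls} and the simplified semi-reduced method of Alg.~\ref{mod_ls_method_exact_inner} produce identical iterates exactly when the Schur-complement relation \eqref{schur_rel} holds along the trajectory, i.e.\ when the Schur complement $H_{yy} - H_{yz}H_{zz}^{-1}H_{zy}$ of the full model $H$ equals the reduced model $H_s$ at every visited $y^k$. Since $H$, the matrix $H_s$, and its normal decomposition $H_s = K_s^\Trans K_s$ with columns \eqref{gp_jacobian} have already been constructed, the only things left to do are: (i) record that $H$ and $H_s$ are admissible, invertible Hessian models; and (ii) verify the Schur-complement identity.

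Part (i) will be immediate from how $H$ was built. Under the usual full-column-rank hypothesis that legitimizes $G = J^\Trans J$ as a Hessian model, $G$ is symmetric positive definite; $H = U\hat H U^\Trans$ is obtained from $G$ by a congruence, then truncation to the block diagonal, then the inverse congruence, and each of these steps carries a symmetric positive definite matrix to another one while moving the spectrum only by amounts controlled by $\|U\|$ and $\|U^{-1}\|$, which are bounded under the standing assumptions. Hence $H$ --- and, as a principal block, $H_s = \hat H_{yy}$ --- is symmetric positive definite with eigenvalues bounded away from $0$ and $\infty$, so both are invertible as Lemma~\ref{nd_newton_is_reduced_newton} demands.

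Part (ii) is where the $UDU^\Trans$ construction does the work. Because $U$ has the block-unitriangular form \eqref{udu}, any matrix written $X = U\hat X U^\Trans$ has the Schur complement of its $zz$ block equal to $\hat X_{yy}$. Applying this with $X = H$ yields $H_{yy} - H_{yz}H_{zz}^{-1}H_{zy} = \hat H_{yy}$; and since $\hat H$ is the block diagonal of $U^{-1}GU^{-\Trans}$, identity \eqref{ugu} --- with $U$ the $UDU^\Trans$ factor of $\nabla^2 F$, so $U_{yz} = (G_{yz}+E_{yz})G_{zz}^{-1}$ --- gives $\hat H_{yy} = G_s + E_{yz}G_{zz}^{-1}E_{zy} = H_s$. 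So \eqref{schur_rel} holds at every $y^k$, and Lemma~\ref{nd_newton_is_reduced_newton} delivers the asserted equivalence. As a final check I would specialize to least squares, where $W = I$, $\bar A = A$, $z = z_m(y) = A^\pinv b$, and $r = -P_A^\perp b$, and confirm that \eqref{gp_jacobian} collapses to the classical Golub--Pereyra reduced Jacobian of the residual $-P_A^\perp b$, so that their variable-projection variant falls out as a corollary.

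The one place to be careful is in part (ii): the matrix $U$ must be the $UDU^\Trans$ factor of the true Hessian $\nabla^2 F = G + E$, not of $G$. That is precisely what puts $-E_{yz}$ in the off-diagonal block of $U^{-1}GU^{-\Trans}$ in \eqref{ugu}, and hence what produces the extra term $E_{yz}G_{zz}^{-1}E_{zy}$ in $H_s$ that distinguishes the Golub--Pereyra model from Kaufman's $G_s$; everything else is routine bookkeeping with identities established above.
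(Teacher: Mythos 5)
Your proposal is correct and follows essentially the same route as the paper: the paper likewise obtains Proposition~\ref{gp_gen_equiv} by noting that the $UDU^\Trans$ structure \eqref{udu} makes the Schur complement of $H = U\hat{H}U^\Trans$ equal to $\hat{H}_{yy} = G_s + E_{yz}G_{zz}^{-1}E_{zy} = H_s$ (with $U$ taken from $\nabla^2 F$, giving \eqref{ugu}), and then invoking Lemma~\ref{nd_newton_is_reduced_newton}. Your added checks (positive definiteness of $H$ via congruence and block-diagonal truncation, and the least-squares specialization recovering the Golub--Pereyra Jacobian) match the paper's surrounding remarks and its Corollary~\ref{gp_lsq_equiv}.
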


Specializing this result to the least-squares case $L(\mu) = \frac{1}{2} \norm{\mu - b}^2$ as before, we have $r = Az - b = AA^\pinv b - b = - P_{A}^\perp b$, and $(K_s)_{:,j}$ simplifies to
\beq
	(K_s)_{:,j} = -\left(P_{A}^\perp (\partial_{y_j} A) A^\pinv + (P_{A}^\perp (\partial_{y_j} A) A^\pinv)^\Trans \right) b,
\eeq
which is precisely the Jacobian $K_r$ of the reduced functional $F(y,z_m(y)) = \frac{1}{2} \norm{-P_A^\perp b}^2$ derived by Golub and Pereyra \cite{Golub:separablenonlinear:2002}. Since $K_r = K_s$, we have $H_r = H_s$ and the desired equivalence:
\begin{corollary}
The Golub-Pereyra variable projection method is equivalent to a simplified semi-reduced method for separable least squares using Hessian model $H$. 
\label{gp_lsq_equiv}
\end{corollary}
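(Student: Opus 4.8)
The plan is to read the corollary off from Proposition~\ref{gp_gen_equiv} together with the least-squares specialization carried out just above its statement. Proposition~\ref{gp_gen_equiv} already says that for any $L$ the reduced method Alg.~\ref{mod_newt_method_ls} on $F_r(y)$ with Hessian model $B_r = H_s$ is equivalent to the simplified semi-reduced method Alg.~\ref{mod_ls_method_exact_inner} on $F(y,z)$ with model $B = H$. Hence all that is needed is to identify, in the case $L(\mu) = \frac12\norm{\mu - b}^2$, the abstract model $H_s$ with the Gauss--Newton model $H_r = K_r^\Trans K_r$ used by Golub and Pereyra, so that ``the reduced method with $B_r = H_s$'' is exactly their variable projection iteration.

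For that identification I would proceed as in the displayed computation preceding the statement. With $L$ quadratic one has $W = I$, $\bar A = A$, and $z_m(y) = A^\pinv b$ (unique, since we assume $A(y)$ has full column rank at the visited points, which is the well-definedness hypothesis of Lemma~\ref{nd_newton_is_reduced_newton}), so the weighted residual is $r = W^{-1}\nabla L = A z_m(y) - b = -P_A^\perp b$. Plugging $\bar A = A$, $z = A^\pinv b$, $r = -P_A^\perp b$ into the general column formula~\eqref{gp_jacobian} and using only symmetry and idempotency of $P_A^\perp$ collapses $(K_s)_{:,j}$ to $-\big(P_A^\perp(\partial_{y_j}A)A^\pinv + (P_A^\perp(\partial_{y_j}A)A^\pinv)^\Trans\big)b$. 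Golub and Pereyra's differentiation formula for $\partial_{y_j} P_A$ then shows this to be a column of the Jacobian of the reduced residual $-P_A^\perp b$, i.e.\ $K_s = K_r$; therefore $H_s = K_s^\Trans K_s = K_r^\Trans K_r = H_r$, the Schur relation~\eqref{schur_rel} of Lemma~\ref{nd_newton_is_reduced_newton} holds with $B_r = H_r$, and the equivalence follows.

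I expect the only genuine work to be the algebra of that middle step --- pushing~\eqref{gp_jacobian} through the substitutions $W = I$, $\bar A = A$, $r = -P_A^\perp b$ without transpose or sign errors and matching the result column-by-column against the classical Golub--Pereyra projector-derivative identity. There is no conceptual obstacle: Proposition~\ref{gp_gen_equiv} already carries the entire algorithmic argument, and the corollary is just the remark that its Hessian model $H_s$ degenerates to the familiar variable-projection model for least squares. (If $A(y)$ fails to be full column rank somewhere, $z_m(y)$ need not be a unique minimizer and Lemma~\ref{nd_newton_is_reduced_newton} no longer applies verbatim, so the statement is understood under the standard variable-projection regularity assumption.)
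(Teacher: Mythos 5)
Your proposal is correct and follows essentially the same route as the paper: specialize Proposition~\ref{gp_gen_equiv} to $L(\mu)=\frac12\norm{\mu-b}^2$ (so $W=I$, $\bar A = A$, $z_m(y)=A^\pinv b$, $r=-P_A^\perp b$), push these through the column formula~\eqref{gp_jacobian} to recover the Golub--Pereyra reduced Jacobian, conclude $K_s=K_r$ and hence $H_s=H_r$, and invoke the equivalence via Lemma~\ref{nd_newton_is_reduced_newton}. Your added remark on the full-column-rank assumption is a harmless clarification the paper leaves implicit.
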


\subsection{Semi-reduced methods as the natural generalization of variable elimination}
Proposition \ref{red_newt_is_mod_newt} and Corollaries \ref{kf_lsq_equiv} and \ref{gp_lsq_equiv} show that the reduced Newton's method and both variants of variable projection can be interpreted as simplified semi-reduced methods. In addition, Propositions \ref{kf_gen_equiv} and \ref{gp_gen_equiv} define new simplified semi-reduced methods that generalize variable projection to nonquadratic $L(\mu)$.

Unfortunately these algorithms are of more theoretical than practical use, for the following reasons. First, we still have not dealt with the problem of computing $z_m(y)$. In general there is no closed form for $z_m(y)$, and computing it may be so expensive that the computational burden outweighs any increase in convergence rate over a simple full or alternating update method. Second, if the domain of $\ell_i(\mu_i)$ is a bounded subset of $\reals$, as is true for the Poisson and several other log-likelihoods, the bounds often must be enforced via reparametrization or constrained optimization. This adds still more complexity and in the latter case makes unconstrained optimization inapplicable. 

The driving technical insight of this paper is the following: if we forgo the simplifications afforded by using optimal block trial point adjustment and use an ordinary semi-reduced method instead, \emph{all of these barriers and difficulties disappear.} Trial point adjustments need not be optimal, so there is no need for the computationally expensive $z_m(y)$, and constraints can be handled by incorporating trial point adjustment and block Gaussian elimination into classical full update methods. Thus, semi-reduced methods provide a natural way to extend variable elimination methods beyond least squares.

\section{A semi-reduced method for bound constrained and nonquadratic problems} \label{bound_constrained_opt}
In this section we present a classical method for smooth bound-constrained problems and turn it into a semi-reduced method. The problem we wish to solve is
\begin{equation}
	\minimize f(x) \quad \subto l \leq x \leq u,
	\label{min_bound}
\end{equation}
where $-\infty \leq l \leq u \leq \infty$ are vectors bounding the components of $x \in \reals^N$, and $f(x)$ is twice differentiable. The method we present is a trial point adjusted variant of Bertsekas's projected Newton method \cite{bertsekas1982projected, gafni1984two}. (In our terminology Bertsekas's method is better described as a \emph{projected Newton-type method}, since it allows for approximate Hessian models.) We choose this method because it is relatively simple, its convergence is global and potentially superlinear, and similar second-order gradient projection methods are empirically among the state-of-the-art for a variety of constrained inverse problems \cite{bardsley2004nonnegatively,schmidt2007fast,schmidt20111,vogelComputationalMethods}.  %We use a Levenberg-Marquardt damping term \cite{kelley1987iterative} to ensure robustness against nonquadratic behavior and rank-deficient Hessian models.

The update in the projected Newton-type method is of the form
\beq
	x^{k+1} = \Proj(x^k - S^k \nabla f(x^k)),
\eeq
where $S^k$ is a scaling matrix which we will assume to be positive definite, and $\Proj(w)$ is the projection of $w$ onto the box $\boxSet = \{w \sepr l \leq w \leq u\}$ given componentwise by
\begin{equation}
	\Proj(w)_i \triangleq \text{median}(l_i,w_i,u_i).
\end{equation}
This iteration is a generalization of the projected gradient method, which restricts $S^k$ to be a multiple of the identity. Bertsekas showed that the naive Newton-type choice $S^k = (B^k)^{-1}$ with $B^k \approx \nabla^2 f(x^k)$ can cause convergence failures, but convergence can be assured by modifying the naive choice using a very simple active set strategy, in which the Hessian is modified to be diagonal with respect to the active indices.%\footnote{While \cite{bertsekas1982projected} considers only the nonnegatively constrained case where $l = 0, u = \infty$, the extension to general $l$ and $u$ is straightforward.}

To describe projected Newton-type methods we will use the following notation. Let $[N] \triangleq \{1,\ldots,N\}$, and for any $J \subset [N]$, let $v_J = (v_i)_{i \in J}$ denote the subvector of $v \in \reals^N$ indexed by $J$ and $X_{J,J} = [X_{ij}]_{i,j \in J}$ the indexed submatrix of $X \in \reals^{n \times n}$. Given $\epsilon \geq 0$ and $x \in \reals^n$, we define the active set associated with $x$ by
\begin{equation}
	\Active(x) = 
	\{ i \in [N]  \sepr (\nabla f(x)_i > 0, \,\, x_i \leq l_i + \epsilon)  
		\,\,\text{or}\,\, (\nabla f(x)_i < 0, \,\, x_i \geq u_i - \epsilon) \},
\end{equation}
and the inactive set as its complement, $\Inactive(x) = [N] - \Active(x)$. Using this notation we present in Alg.~\ref{alg:tmp} the projected Newton-type method with trial point adjustment, where an identity scaling matrix is chosen on the active set for concreteness. As in the unconstrained case, the only difference between the semi-reduced method Alg.~\ref{alg:tmp} and the original full update method (found in equations (32)\textendash(37) of \cite{bertsekas1982projected}) is the addition of the adjustment operator $x_d(x)$: specifically, on the left hand side of line 5 and the right hand side of line 6 of Alg.~\ref{alg:tmp}, our method has $x_d(x_p(\alpha^j))$ while \cite{bertsekas1982projected} has only $x_p(\alpha^j)$. Careful examination of the global convergence proof, Proposition 2 in \cite{bertsekas1982projected}, reveals that, with very minor additions, it also establishes convergence of Alg.~\ref{alg:tmp}. Here we review the argument very briefly, with just enough detail to describe how to adapt it to accommodate trial point adjustment. 

\begin{algorithm}%[H] 
\caption{Projected Newton-type method with trial point adjustment.}
\begin{algorithmic}[1]
	\REQUIRE $x^0 \in \reals^{N}$, $\delta \in (0,1/2)$, $\alpha \in (0,1)$ 	\FOR{$k = 0,1,2,\ldots$}
		\STATE $g = \nabla f(x^k)$, $B^k = \Bfun(x^k)$
		\STATE $\Delta x_I = -(B_{I,I}^k)^{-1} g_I$, $\Delta x_A = -g_A$
		\STATE Define $x_p(s) = \Proj(x^k + s \Delta x)$
		\STATE  Find the smallest $j \geq 0$ such that  			
		\beq
		f(x_d(x_p(\alpha^j)) - f(x^k) \leq \delta \left\{  g_I^\Trans(\alpha^j \Delta x_I) + 
			g_A^\Trans (x_p(\alpha^j)_A - x^k_A)  \right\}
		\label{armijo_bert}
		\eeq
		\STATE $x^{k+1} = x_d(x_p(\alpha^j))$
		\STATE $\epsilon \str \min(\epsilon_0, \norm{x_p(1) - x^k})$
	\ENDFOR
\end{algorithmic}
\label{alg:tmp}
\end{algorithm}

\begin{proposition}
Assume that $\nabla f(x)$ is Lipschitz continuous on any bounded set of $\reals^N$ and the eigenvalues of $B^k$ are uniformly bounded away from zero and infinity for all $k$. Then every limit point of Alg.~\ref{alg:tmp} is a stationary point of Problem \eqref{min_bound}.
\begin{proof}

By contradiction: suppose a subsequence $(x^k)_{k \in K}$ of $(x^k)_{k = 0}^\infty$ exists such that $\lim_{k \rightarrow \infty, k \in K} x^k = \bar{x}$, where $\bar{x}$ is not a critical point. Let $s_k = \alpha^{j_k}$ denote the step size chosen at iteration $k$ on line 5; the proof of Proposition 2 of \cite{bertsekas1982projected} first shows that the monotonicity of the sequence $(f(x^k))_{k = 0}^\infty$, Lipschitz continuity of $\nabla f(x)$, eigenvalue bound on $B^k$, and the nonpositivity of the terms on the right hand side of \eqref{armijo_bert} together imply that $\liminf_{k \rightarrow \infty, k \in K} s_k = 0$. Since all the required properties still hold in our case, this conclusion holds for Alg.~\ref{alg:tmp} as well. Next it is shown that, for some $\bar{s} > 0$ independent of $k$, we have
\beq
	f(x_p(s)) - f(x_k) \leq \delta \{ g_I^\Trans(s\Delta x_I) + 
			g_A^\Trans (x_p(s)_A - x^k_A)\} \quad \text{for $s \leq \bar{s}$.}
\eeq
The computation supporting this claim depends only on the properties of $f(x), x_p(s), A, \text{and}\,\, I$, so it still holds for Alg.~\ref{alg:tmp}. But $f(x_d(x)) \leq f(x)$ for all $x$, so
\beq
	f(x_d(x_p(s))) - f(x_k) \leq \delta \{ g_I^\Trans(s\Delta x_I) + 
			g_A^\Trans (x_p(s)_A - x^k_A)\} \quad \text{for $s \leq \bar{s}$.}
\eeq
It follows that $s_k \geq \alpha^J$ where $J$ is the smallest nonnegative integer such that $\alpha^J \leq \bar{s}$, contradicting $\liminf_{k \rightarrow \infty, k \in K} s_k = 0$.
\end{proof}
\end{proposition}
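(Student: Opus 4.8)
The plan is to argue by contradiction while importing Bertsekas's global convergence argument for the un-adjusted projected Newton-type method (the proof of Proposition~2 of \cite{bertsekas1982projected}) essentially unchanged, isolating the single place where the trial point adjustment $x_d$ must be accounted for. So suppose some subsequence $(x^k)_{k\in K}$ of the iterates of Alg.~\ref{alg:tmp} converges to a point $\bar x$ that is not a stationary point of \eqref{min_bound}. The right-hand side of the sufficient-decrease test \eqref{armijo_bert} is nonpositive: the inactive contribution is $-\delta\alpha^{j} g_I^\Trans (B_{I,I}^k)^{-1} g_I \le 0$ by positive definiteness of $B^k$, while each active contribution $\delta g_i\,(x_p(\alpha^{j})_i - x^k_i)$ is nonpositive because $\Proj$ moves the $i$-th active coordinate against the sign of $g_i$. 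Hence $(f(x^k))_{k=0}^\infty$ is nonincreasing; since $f$ is continuous and $f(x^k)\to f(\bar x)$ along $K$, a monotone sequence with a convergent subsequence converges, so $f(x^k)\to f(\bar x)$ over all $k$ and in particular $f(x^{k+1}) - f(x^k) \to 0$.

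First I would invoke, verbatim, the opening portion of Bertsekas's proof to obtain $\liminf_{k\to\infty,\,k\in K} s_k = 0$, where $s_k = \alpha^{j_k}$ is the step size accepted at iteration $k$; this combines the monotonicity just established, Lipschitz continuity of $\nabla f$ on the bounded set containing $(x^k)_{k\in K}$, the uniform eigenvalue bounds on $B^k$, the nonpositivity of the right-hand side of \eqref{armijo_bert}, and the non-stationarity of $\bar x$. The part of the write-up I expect to require the most care---though it is bookkeeping rather than a genuine obstacle---is checking that every estimate in this portion refers only to $f$, the projected path $x_p(\cdot)$, the index sets $\Active$ and $\Inactive$, the search direction, and $B^k$, none of which is altered by inserting $x_d$. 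In particular the accepted iterate $x^{k+1} = x_d(x_p(\alpha^{j_k}))$ still satisfies $f(x^{k+1}) - f(x^k)\le \delta\{\cdots\}$ directly through the test \eqref{armijo_bert}, and the $\epsilon$-update on line~7 is written in terms of $x_p(1)$ rather than $x_d(x_p(1))$, so it too is untouched; hence Bertsekas's conclusion transfers.

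Then I would invoke the second portion of Bertsekas's argument, which yields a threshold $\bar s > 0$ independent of $k$ such that the \emph{un-adjusted} test $f(x_p(s)) - f(x^k) \le \delta\{g_I^\Trans(s\Delta x_I) + g_A^\Trans(x_p(s)_A - x^k_A)\}$ holds for all $s\in(0,\bar s]$ and all sufficiently large $k\in K$; this is a Taylor-expansion-and-Lipschitz estimate about the path $x_p$ alone and is again insensitive to $x_d$. Here is the only genuinely new ingredient: since $f(x_d(x))\le f(x)$ for every $x$, we get $f(x_d(x_p(s))) - f(x^k)\le f(x_p(s)) - f(x^k)\le \delta\{\cdots\}$ for all $s\in(0,\bar s]$, so the \emph{adjusted} test \eqref{armijo_bert} is satisfied there too. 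Consequently the backtracking loop on line~5 of Alg.~\ref{alg:tmp} terminates with $s_k = \alpha^{j_k}\ge \alpha^{J}$, where $J$ is the least nonnegative integer with $\alpha^{J}\le \bar s$, so $s_k$ is bounded away from zero along $K$---contradicting $\liminf_{k\in K} s_k = 0$ and completing the proof.
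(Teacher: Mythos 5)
Your proposal is correct and follows essentially the same route as the paper's proof: import the two stages of Bertsekas's Proposition~2 argument (first $\liminf_{k \in K} s_k = 0$, then a $k$-independent threshold $\bar{s}$ for the un-adjusted sufficient-decrease test), and then use $f(x_d(x)) \leq f(x)$ as the sole new ingredient to transfer the bound to the adjusted test and reach the contradiction. The extra bookkeeping you include (nonpositivity of the right-hand side of \eqref{armijo_bert}, the unchanged $\epsilon$-update on the last line) is consistent with, and slightly more explicit than, the paper's presentation.
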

\noindent Careful examination of the proofs in \cite{bertsekas1982projected} indicates that the other properties of the projected Newton-type method generally continue to hold for the trial point adjusted version, but the full details are beyond the scope of this paper.

Since $B$ is only required to have eigenvalues bounded away from $0$ and $\infty$, Alg.~\ref{alg:tmp} can accommodate a wide variety of Hessian models and regularization strategies. In our numerical experiments we use Alg.~\ref{alg:tmp_damp}, which is a special case of Alg.~\ref{alg:tmp} and thus inherits its convergence properties. Alg.~\ref{alg:tmp_damp} sets $B = \Bfun(x) + \lambda I$, where $\Bfun(x)$ is a Gauss-Newton Hessian, $\lambda I$ is a Levenberg-Marquardt damping term, and $\lambda$ is adjusted at every iteration according to a step quality metric $\rho$. Levenberg-Marquardt regularization is useful for guarding against rank-deficient Hessian models \cite{NocedalWright:NumericalOptimization:2006}.

\begin{algorithm} 
\caption{Damped projected Newton-type method with trial point adjustment.}
\begin{algorithmic}[1]
	\REQUIRE $\delta \in (0,1/2)$, $\alpha \in (0,1)$,  $\lambda_{min}, \lambda_{max} \in [0,\infty)$, $0 \leq \rho_{bad} < \rho_{good} \leq 1$
	\REQUIRE $\tau \in (0, \infty)$, $\epsilon_0 \in (0,\infty)$
	\STATE $\epsilon \str \epsilon_0$
	\FOR{$k = 0,1,2,\ldots$}
		\STATE $B = \Bfun(x^k) + \lambda I$, $g = \nabla f(x^k)$, $A = \Active(x^k)$, $I = \Inactive(x^k)$
		\STATE If $\norm{\Proj(x^k - g) - x} \leq \tau$ or $k \geq k_{max}$, stop.
		\STATE Solve $B_{I,I} \Delta x_I = -g_I$. Set $\Delta x_A = -g_A$.% (optional).
		\STATE Define $x_p(s) = \Proj(x^k + s \Delta x)$
		\STATE  Find the smallest $j \geq 0$ such that 
			\[f(x_d(x_p(\alpha^j)) - f(x^k) \leq \delta \left\{  g_I^\Trans(\alpha^j \Delta x_I) + g_A^\Trans (x_p(\alpha^j)_A - x^k_A)  \right\}\]
		\STATE $x^{k+1} = x_d(x_p(\alpha^j))$
		\STATE $\rho = (f(x^k)-f(x^{k+1}))/(-\tfrac{1}{2}g_I^\Trans \Delta x_I)$
		\IF{$\rho > \rho_{good}$}
			\STATE $\lambda \str \max(\lambda / 2, \lambda_{min})$
		\ELSIF{$\rho < \rho_{bad}$}
			\STATE $\lambda \str \min(10 \lambda, \lambda_{max})$
		\ENDIF
		\STATE $\epsilon \str \min(\epsilon_0, \norm{x_p(1) - x^k})$
	\ENDFOR
\end{algorithmic}
\label{alg:tmp_damp}
\end{algorithm}

While any linear algebra technique may be used to solve the Newton-type systems in Algs.~\ref{alg:tmp} and \ref{alg:tmp_damp}, block Gaussian elimination is of particular interest because of the role it plays in our semi-reduced framework. The block Gaussian elimination methods used in our numerical experiments are introduced in the next section.

\section{Using block Gaussian elimination to exploit separable structure} \label{sec:block_lin_algs}
One of the key advantages of variable elimination methods is their ability to take advantage of special problem structure, such as multiple measurement vectors \cite{kaufman1992separable}. Block Gaussian elimination can be used to derive linear solvers with similar structure exploiting properties. Here we describe two such algorithms which we claim can provide an advantage over standard methods; these claims are tested in our experiments below. 

The first method is a QR method for normal equations, and is thus appropriate for methods employing a Gauss-Newton Hessian model. This approach is most suited for highly ill-conditioned systems, such as those arising from exponential fitting and other difficult problems traditionally tackled by variable projection. Generalized Gauss-Newton Hessian models for nonquadratic likelihoods can be handled by this method \cite{Bunch:quasiLikeAlg:1993}. The second method is for problems where $z$ is very high dimensional (a vectorized image or volume array for example), while $y$ is relatively low-dimensional. It is similar to the linear algebra algorithms used in the reduced update optimizers defined in \cite{Vogel:PhaseDiv:1998,Chung:2010}. Unlike these algorithms, which are designed for specific least squares optimization tasks, our algorithm can be used in any Newton-type optimizer, including ones that handle Poisson likelihoods or bound constraints.

\subsection{Solving normal equations by block decomposed QR factorization}
We now present a method for solving normal equations by block Gaussian elimination and QR factorization. Normal equations are systems of the form
\beq
	J^\Trans J \Delta x = -J^\Trans r,
	\label{normal_eq}
\eeq
where $J \in \reals^{m \times N}$. The Newton-type system $B \Delta x = -g$ has this form when we use a Gauss-Newton Hessian model or its generalization for non-quadratic likelihoods \cite{Bunch:quasiLikeAlg:1993}. Assuming $B = J^\Trans J$, the reduced and damped Gauss-Newton system $(B_{I,I} + \lambda I) \Delta x_I = -g_I$ from Alg.~\ref{alg:tmp_damp} can also be written in this form by deleting columns from $J$ and augmenting the result with the scaled identity matrix $\sqrt{\lambda} I$ \cite{NocedalWright:NumericalOptimization:2006}.

Cholesky factorization is the fastest way to solve normal equations, but rounding error can amplify to unacceptable levels when $J$ is highly ill-conditioned, as in some curve fitting problems. Greater accuracy can be gained at the expense of additional computation by QR factorizing $J$. Assuming $J$ is full rank, we will write the (thin) QR factorization as $[Q,R] = \texttt{qr}(J)$, where $Q \in \reals^{m \times N}$ is an orthogonal matrix and $R \in \reals^{N \times N}$ is an invertible upper triangular matrix. Substituting $J = QR$ into \eqref{normal_eq} and noting that $Q^\Trans Q = I$, we obtain the solution
\beq
	\Delta x = - R^{-1} Q^\Trans r.
\eeq
In our method we solve \eqref{normal_eq} by QR factorizing not the system itself, but its block decomposed form \eqref{schur_system}. We begin by putting \eqref{schur_system} in normal equation form. From \eqref{gn_schur} we have $B_s = J_s^\Trans J_s$, where $J_s = P_{J_z}^\perp J_y$. Similarly we have
%\beq
$	-g_y + B_{yz} B_{zz}^{-1} g_z = - J_s^\Trans r.$
%\eeq
From these results we can write \eqref{schur_system} as a pair of normal equations:
\begin{subequations}
\begin{align}
	(J_s^\Trans J_s) \Delta y
	&= -J_s^\Trans r \label{y_schur_eqn_lsq3} \\
	(J_z^\Trans J_z) \Delta z
	&= - J_z^\Trans (r + J_y \Delta y) \label{z_schur_eqn_lsq3}.
\end{align} \label{schur_system_lsq3}%
\end{subequations}
To compute $J_s$, we need to compute $P_{J_z}^\perp$. This may be done using the QR factorization of $J_z$: if $X = QR$ is the QR factorization of a matrix $X$ with full column rank, we have
\beq
P_X^\perp = I - Q Q^\Trans.
\eeq
Using this result we can form $J_s$ and solve the system as described in Alg.~\ref{qr_decomp_solve}.
\begin{algorithm}%[H]
\caption{Solution of $J^\Trans J \Delta x = -J^\Trans r$ by block decomposed QR.}
\begin{algorithmic}[1]
	\STATE $[Q_z, R_z] = \texttt{qr}(J_z) $
	\STATE $[t,T] = Q_z^\Trans [r, J_y]$
	\STATE $J_s = J_y - Q_z T$
	\STATE $[Q_s, R_s] = \texttt{qr}(J_s) $
	\STATE $\Delta y = - R_s^{-1} Q_s^\Trans r$
	\STATE $\Delta z = - R_z^{-1} (t + T \Delta y)$
\end{algorithmic}
\label{qr_decomp_solve}%
\end{algorithm}

Alg.~\ref{qr_decomp_solve} is useful when $J_z$ has structure that makes its QR factorization easier to compute than that of the full $J$. As an example, suppose that $J_z$ is a block diagonal matrix with blocks $J_z^{(i)}$ for $i = 1,\ldots,n$. Such matrices arise in separable problems with multiple measurement vectors. In this case $Q_z$ and $R_z$ are block diagonal and Alg.~\ref{qr_decomp_solve} can be adapted to exploit this, as shown in Alg.~\ref{qr_decomp_solve_block}. Note that this algorithm never generates the large sparse matrix $J$, but only the nonzero blocks $J_y^{(i)}$ and $J_z^{(i)}$, which are computed just when they are needed. We expect this resource economy to result in reduced memory usage, higher cache efficiency, and ultimately a faster solution. 

\begin{algorithm}%[H]
\caption{Alg.~\ref{qr_decomp_solve} specialized to the case of block diagonal $J_z$.}
\begin{algorithmic}[1]
	\FOR{$i = 1,\ldots,n$}
		\STATE Compute $J_y^{(i)}, J_z^{(i)}$
		\STATE $[Q_z^{(i)}, R_z^{(i)}] = \texttt{qr}(J_z^{(i)})$
		\STATE $[t^{(i)},T^{(i)}] = [Q_z^{(i)}]^\Trans [r, J_y]$
		\STATE $J_s^{(i)} = J_y^{(i)} - Q_z^{(i)} T^{(i)}$
	\ENDFOR
	\STATE $[Q_s, R_s] = \texttt{qr}(J_s) $
	\STATE $\Delta y = - R_s^{-1} Q_s^\Trans r$
	\FOR{$i = 1,\ldots,n$}
		\STATE $\Delta z^{(i)} = - [R_z^{(i)}]^{-1} (t^{(i)} + T^{(i)} \Delta y)$
	\ENDFOR
\end{algorithmic}
\label{qr_decomp_solve_block}%
\end{algorithm}

\subsection{Mixed CG/Direct method for systems with one very large block.}
In some separable inverse problems, the number of linear variables $z$ is too large for direct solution by Cholesky or QR factorization. This is particularly true in image and volume reconstruction problems: if each pixel of a $256 \times 256$ pixel image is considered a free variable, which is very modest by imaging system standards, the relevant Jacobians and Hessians will be $65536 \times 63356$ and usually impossible to factorize or even store in memory. In this case conjugate gradients (CG) or other iterative linear algebra methods must be employed to solve the Newton-type systems $B \Delta x = -g$. These methods only need functions that compute matrix-vector products with $B$, which may be much less memory consuming if $B$ has special structure. Unfortunately the matrix $B$ is often ill-conditioned, which can lead to slow convergence of CG. In some cases, $B_{zz}$ is well conditioned, but the additional blocks involving the nonlinear variables $y$ result in a poorly conditioned $B$. A method that uses iterative linear algebra only on the subblock $B_{zz}$ has the potential to be more efficient.

Such a method may be derived by solving $B \Delta x = -g$ in the block decomposed form \eqref{schur_system}. We first solve \eqref{y_schur_eqn} by forming the small matrix $B_s = B_{yy} - B_{yz} B_{zz}^{-1}  B_{zy}$ column-by-column. We solve for $\Delta y$ by Cholesky factorizing this matrix, then solve \eqref{z_schur_eqn} by CG to obtain $\Delta z$, as summarized in Algorithm \ref{alg:mixed_dir_cg}. To understand when  Alg.~\ref{alg:mixed_dir_cg} may be more efficient than full CG, we roughly estimate and compare the costs of each algorithm. Let $t$ be the total floating point operations (flops) required to compute a matrix-vector product with $B$. We split $t$ into $t = t_y + t_z$, where $t_z$ is the cost of a matrix-vector product with $B_{zz}$, and $t_y$ is the cost of computing products with all three remaining blocks $B_{yy}$, $B_{yz}$, and $B_{zy}$. Then solving $B \Delta x = -g$ requires $T_{cg} = k (t_y + t_z)$ flops, where $k$ is the number of iterations required to achieve some suitable accuracy. 

\begin{algorithm}
\caption{Mixed CG/Direct solution of $B \Delta x = -g$.}
\begin{algorithmic}[1]
	\REQUIRE Functions that compute matrix-vector products with $B_{yy}$, $B_{yz}$, $B_{zy}$, $B_{zz}$. 
	Inverse matrix-vector products $B_{zz}^{-1} w$ are computed by conjugate gradients.
	\FOR{$i = 1,\ldots,N_y$}
		\STATE $(B_s)_{:,i} = B_s e_i$
	\ENDFOR
	\STATE Calculate a Cholesky factorization $R^\Trans R = B_s$
	\STATE $g_r = g_y - B_{yz} B_{zz}^{-1} g_z$	
	\STATE $ \Delta y = R^{-1} R^{-\Trans} g_r$
	\STATE $\Delta z = - B_{zz}^{-1} (g_z - B_{zy} \Delta y)$
\end{algorithmic}
\label{alg:mixed_dir_cg}
\end{algorithm}

In Alg.~\ref{alg:mixed_dir_cg}, we assume that computing $B_s$ is the dominant cost and the other computations are negligible, which is reasonable if $N_y$ is significantly greater than $1$. If $k_z$ is the number of CG iterations required to solve $B_{zz} u = w$ to suitable accuracy, then the cost of computing each column of $B_s$ is $t_y + k_z t_z$, yielding a total cost of $T_{mix} = N_y (t_y + k_z t_z)$ for all $N_y$ columns. By setting $T_{mix} \leq T_{cg}$, we see that Alg.~\ref{alg:mixed_dir_cg} will outperform full CG when the iterations $k$ required by full CG exceeds a certain threshold:
\beq
	k \gtrsim N_y \frac{t_y + k_z t_z}{t_y + t_z}.
	\label{mixed_beats_cg}
\eeq
The right-hand side is smallest when $t_y$ is much larger than $t_z$, $k_z$, and $N_y$ is relatively small; this corresponds to the case where $B_{zz}$ is relatively well conditioned, products with $B_{zz}$ are cheap, and there are not too many parameters in $y$. If $t_y \gg t_z$, then the threshold becomes $k \gtrsim N_y$. This is the minimum number of iterations we would expect from full CG if the eigenvalues of $B_{yy}$ are isolated, so Alg.~\ref{alg:mixed_dir_cg} should perform at least as well as full CG in this limit. However, if the spectrum of $B_{zz}$ and the other blocks combines unfavorably, the required iterations $k$ could be much larger, in which case Alg.~\ref{alg:mixed_dir_cg} should be more efficient.

Even when \eqref{mixed_beats_cg} does not hold, Alg.~\ref{alg:mixed_dir_cg} may still be desirable for other reasons. For example, if $B$ is much more ill-conditioned than $B_{zz}$, round-off error will be less severe in Alg.~\ref{alg:mixed_dir_cg} than in full CG because direct linear algebra is less vulnerable to bad conditioning. Also, Alg.~\ref{alg:mixed_dir_cg} is highly parallelizable because each column of $B_s$ can be computed completely independently of the others, while full CG is an inherently sequential algorithm.

\section{Numerical experiments} \label{num_expts}
In this section we show how semi-reduced methods can help us solve practical scientific problems faster and more robustly. To this end, we consider two model inverse problems relevant to scientific applications. In these problems, the use of Poissonian likelihoods and/or bound constraints greatly increases solution accuracy, so the unconstrained least squares is not preferable and reduced update methods are not appropriate. They are also well-suited for the linear algebra methods derived in \S \ref{sec:block_lin_algs}. The first problem is an exponential sum fitting problem involving multiple measurement vectors, and the second is a semiblind deconvolution problem from solar astronomy. We also solve a third problem, which is a toy model of the second problem. Its purpose is to show when trial point adjustment can be useful, since (as discussed below) we did not find it particularly useful in the first two problems.

For each of the three problems, we selected an appropriate semi-reduced method and compared it to a standard full update method. In the first two problems, the full update method was the projected Newton-type method Alg.~\ref{alg:tmp_damp} with no block Gaussian elimination and no block trial point adjustment. This approach was compared with two alternatives: Alg.~\ref{alg:tmp_damp} with elimination off and adjustment on, and Alg.~\ref{alg:tmp_damp} with elimination on and adjustment off. (Elimination and adjustment act independently, so testing a fourth condition with both techniques switched on yields little additional information.)  Block Gaussian elimination was performed using one of the methods derived in \S \ref{sec:block_lin_algs}, while block trial point adjustments were obtained by performing a few iterations of Alg.~\ref{alg:tmp_damp} to approximately solve $\min_{z \in \Zset} F(y^k,z)$, starting from the current iterate $z^k$. The parameters were set to $\delta = 10^{-4}$, $\alpha = 0.2$, $\lambda_{min} = 10^{-20}$, $\lambda_{max} = 10^{20}$, $\epsilon_0 = 2.2 \cdot 10^{-14}$, $\rho_{good} = 0.7$, $\rho_{bad} = 0.01$, $\tau = \max( 2.2 \cdot 10^{-15}, \norm{\Proj(x^0 - \nabla F(x^0)) - x^0} / 10^8)$ where $x^0$ is the initial point. In the third problem we use simpler algorithms which we describe later. All of our experiments were performed in MATLAB R2011a on a MacBook Pro with 2.4 GHz Intel Core 2 Duo processor. 

Our first finding was that trial point adjustment did not help us to solve the first two problems faster. Adjustment sometimes reduced backtracking and the total number of outer iterations needed, but not consistently or dramatically enough to outweigh the cost of solving inner adjustment subproblems at every iteration. As a result, total function evaluations and total runtime generally increased significantly when adjustment was used. For example, over 20 randomly sampled instances of the problem in \S \ref{exp_sum_fit}, we compared a stringent inner solver ($k_{max} = 100$, $\tau = \norm{\Proj(z^k - \nabla_z F(y^k,z^k)) - z^k} / 10^8$) to no inner solver at all; in the former case the total function evaluations to solve the problem ranged from $200-600$, while for no inner iterations the range was $60-100$. We tried various intermediates between these two extremes\textemdash intermediate values of $\tau$, lower values of $k_{max}$, stopping early if the Armijo condition was satisfied before the inner iteration limit\textemdash but we always found that it was most efficient to simply set $k_{max} = 0$, meaning no trial point adjustment.

For this reason we do not report any further on the effects of trial point adjustment in the first two problems. Instead we focus on the effects of block Gaussian elimination in the first two problems, and consider adjustment's effects only in the third problem. Note that since high-precision inner optimizations generally cause inefficiency in the first two problems, extensions of variable elimination that require them (such as \cite{SimaVanHuffel:varProBndMRS:2007, Nagy:nonnegVarPro:2012}) would be vulnerable to inefficiency in these problems, even if they could handle nonquadratic objectives. 

\subsection{Exponential sum fitting} \label{exp_sum_fit}
In exponential sum fitting problems, the expected value $\mu(t)$ of a physical quantity at time $t$ is assumed to be the sum of $c$ exponentially decaying components with decay rates $y_j$ and nonnegative weights $z_j$:
\beq
	\mu(t) = \sum_{j = 1}^c z_j \exp(-y_j t).
\eeq
In many cases the decay rates do not vary from experiment to experiment, but the weights $z$ may vary \cite{mullen2010sum}. Thus, if $n$ experiments are performed, the expected decay in the $k^{th}$ experiment is
\beq
	 \mu_k(t) = \sum_{j = 1}^c Z_{jk} \exp(-y_j t), \quad k = 1,\ldots,n.
\eeq
We assume that a set of $m$ Poisson-distributed observations $B_{1k},\ldots,B_{mk}$ of each $\mu_k(t)$ are made at $t = t_1,\ldots,t_m$:
\beq
	B_{ik} \sim \text{Poisson}(\mu_k(t_i)), \quad \text{for} \quad
	\begin{matrix} i = 1,\ldots,m \\ k = 1,\ldots,n. \end{matrix}
\eeq
If the columns of $B$ and $Z$ are stacked on top of each other to form vectors $b$ and $z$, then the associated maximum likelihood problem is
\beq
	\minimize_{y,z} L( (I_n \kron A(y))z) \quad \subto z \geq 0,
	\label{soe_prob_poiss}
\eeq
where $A(y)_{ij} = \exp(-y_j t_i)$, $\kron$ is the Kronecker product, $I_n$ is the $n \times n$ identity matrix, and $L(\mu)$ is the Poisson negative log-likelihood.

Using this model we generated synthetic data which simulated the problem of determining several decay rates from a large collection of relatively low-count time series. Each time series was generated from $c = 4$ decaying components with rates $(y_1,y_2,y_3,y_4) = (1,2,3,4)$ and $m = 1000$ uniformly spaced time samples from $t = 0$ to $5$. The number of measurement vectors was $n = 100$, and the nonnegative weights were randomly generated according to $z_{jk} = 10 \exp(1.2 \mathcal{Z}_{jk})$, where the $\mathcal{Z}_{jk}$ were random numbers from the standard normal distribution. A typical curve generated by this model is shown in Fig.~\ref{fig:decayExpt}. While this simple model does not directly represent a real physical problem, it generates problems similar in mathematical form, scale, and difficulty to problems encountered in real data analysis \cite{mullen2010sum,mullen2007timp}. In particular, each component has a few measurement vectors in which it dominates, but no component is ever observed in complete isolation. The persistent mixture of components with similar rates and the low signal-to-noise ratio combine to make this problem formidable.

As we mentioned above, trial point adjustment was not useful in this problem, so here we compare Alg.~\ref{alg:tmp_damp} in two modes: a semi-reduced mode with block Gaussian elimination, and a full update mode without it. In both cases the Hessian model was computed using the Gauss-Newton method \cite{Wedderburn:quasiLike:1974, Bunch:quasiLikeAlg:1993}, and the resulting Gauss-Newton system was solved by QR. (Direct Cholesky factorization of the normal equations is not sufficiently accurate due to the notoriously poor conditioning of exponential fitting problems \cite{Golub:separablenonlinear:2002}.) In the standard mode, the full normal equations were solved directly using MATLAB's built-in sparse QR routine,  while in the block Gaussian elimination mode, we used a MATLAB implementation of Alg.~\ref{qr_decomp_solve_block}. MATLAB sparse QR employs the state-of-the-art SuiteSparseQR package \cite{davis2011algorithm}. To obtain the best possible performance from SuiteSparseQR, matrix-vector products with the $Q$ factor were performed implicitly, and a permutation was applied to switch the blocks $J_y$ and $J_z$. (The permutation speeds up the algorithm by an order of magnitude, as it enables the underlying Householder triangularization method to preserve the matrix's sparsity pattern.) Note that Alg.~\ref{qr_decomp_solve_block} has a less efficient implementation than SuiteSparseQR because the loops in Alg.~\ref{qr_decomp_solve_block} run relatively inefficiently in MATLAB, while SuiteSparseQR is written in C++. 

Our main finding was that block Gaussian elimination computed steps several times faster than sparse QR with no loss of accuracy. In a typical random instance of the problem described above, step computation by sparse QR factorization of the full Jacobian required $0.38$ seconds (s), while Alg.~\ref{qr_decomp_solve_block} solved the system in $0.10$ s, a roughly 4-fold improvement. Since most of the algorithm's time is spent in step computation, the minimum was found significantly faster using block Gaussian elimination: in this instance, the standard mode took $18$ s, while using Alg.~\ref{qr_decomp_solve_block} took $6$ s. The accuracies of the two modes were functionally indistinguishable, as the objective values $F(y^k,z^k)$ output in each mode were the same to at least 8 significant figures. From this we infer that the two algorithms do essentially the same mathematical operations, but the computer finishes the operations faster using Alg.~\ref{qr_decomp_solve_block}.

The speed difference can be explained by two factors. First, Alg.~\ref{qr_decomp_solve_block} does not build the full $J$ matrix, but factorizes of the $n$ diagonal blocks of $J_z$ just as they are needed. In contrast, the sparse QR algorithm must build all of $J$ first, which takes $60-80\%$ of the CPU time required to actually solve the system. In Alg.~\ref{qr_decomp_solve_block} the blocks of $J_z$ are built and factorized just-in-time, so there is no need to build a large sparse matrix. Second, Alg.~\ref{qr_decomp_solve_block} solves the overall system by solving a large number of small and very similar subsystems, which is more CPU and cache-friendly than operating on a large sparse matrix. 

The formidable difficulty of this problem, and the need for a bound-constrained Poissonian solver, may be appreciated by comparing the accuracy of the Poissonian method to a popular alternative for Poissonian problems, the variance-weighted least squares method. In the variance-weighted least squares method one solves
\beq
	\minimize_{y,z} \norm{W[(I_n \kron A(y))z - b]}_2^2 \quad \subto z \geq 0,
	\label{soe_prob_nnls}
\eeq
where $W$ is a diagonal matrix with $W_{ii} = 1/\max(b_i^{1/2},\epsilon)$, and $\epsilon = 1$ is a small constant used to avoid division by zero \cite{Laurence:2010:mlePoisson}. We generated 100 random instances of the exponential sum fitting problem described above, and solved each using the Poissonian approach \eqref{soe_prob_poiss} and the weighted least squares approach \eqref{soe_prob_nnls}, in both cases using Alg.~\ref{alg:tmp_damp}. The decay vector $y$ resulting from each experiment was sorted to account for the problem's permutation ambiguity, resulting in 100 estimates of $y_1, y_2, y_3,$ and $y_4$ from each method. We then calculated the median and median absolute deviation of the 100 estimates of each $y_i$ from each method. (We used the median as a summary statistic because it is invariant to reparametrization of $y$ and robust to the occasional failures of both methods.) The results are shown in Fig.~\ref{fig:decayExpt}, \emph{right}, and it is clear that the Poissonian solver's decay rates are far more accurate.

\begin{figure}%[h]
\centering
\begin{tabular}{c c}
\includegraphics[scale=0.72]{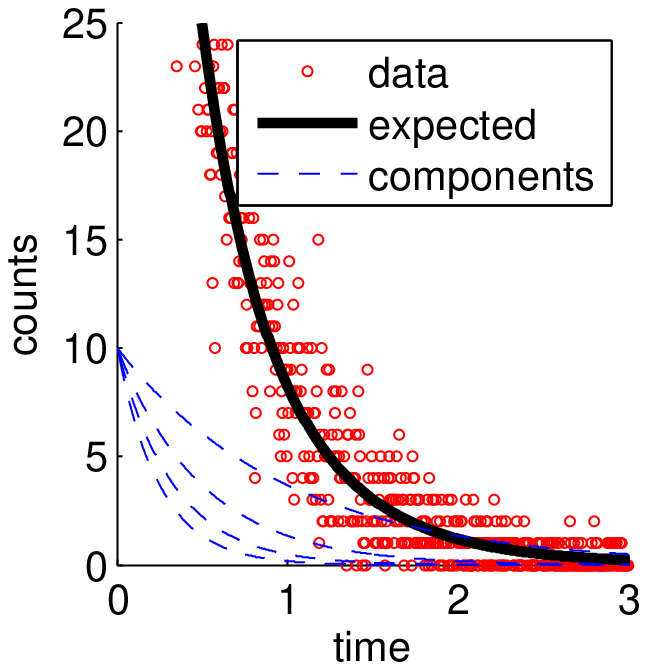} & \includegraphics[scale=0.60]{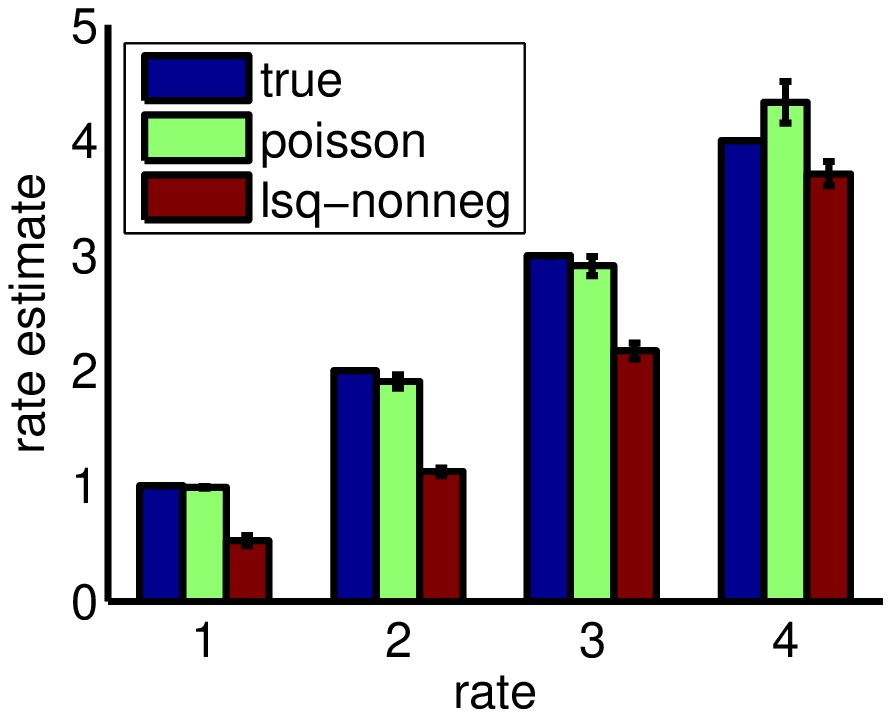} 
\end{tabular}
\caption{\emph{Left:} Sample data from the sum-of-exponentials model. The four decaying components (blue dotted lines) have decay rates $y_j = j$ for $j = 1,2,3,4$, and when summed together with weights $z_j$, these components create the expected intensity curve $\mu(t)$ (solid black line). The poisson-distributed samples $b_i$ of $\mu(t)$ (red dots) are taken at a spacing of $\Delta t = 0.005$. The low available counts suggest a Poisson likelihood should be used. \emph{Right:} Comparison of fitted and true decay rates $y_j$ for $j = 1,2,3,4$ using variance-weighted nonnegative least squares and Poisson likelihood. The bar heights are the median values found by solving 100 random problem instances, and the error bars represent median absolute deviations. }
\label{fig:decayExpt}
\end{figure}

\subsection{Multiframe semiblind deconvolution} \label{semiblind}
Image deconvolution is a linear inverse problem in which we have an image $b$ degraded by convolution with a known point spread function (PSF) $h$, and we wish to undo the degradation to obtain the unknown clean image $z$. Assuming that each of these variables are 2D arrays supported on a square $\Omega \in \mathbb{Z}^2$, we can write the problem as
\beq
	Az + \epsilon = b,
\eeq
where $A : \reals^\Omega \rightarrow \reals^\Omega$ is the convolution operator: $Az = h * z$, and we assume periodic boundary conditions for simplicity. In multiframe blind deconvolution, there are several images and PSFs and the PSFs depend on unknown parameters, so that we have
 \beq
 	A(y^{(k)}) z^{(k)} + \epsilon^{(k)} = b^{(k)} \quad \text{for $k = 1,\ldots,n$}.
 \eeq
If we have a parametric model of the PSFs, the problem is called semiblind.

Here we consider a simplified, synthetic version of a real multiframe semiblind deconvolution problem from solar imaging, which is described in \cite{shearer2012first}. In this problem, a spaceborne telescope observing the Sun in the extreme ultraviolet wavelengths collects images which are are contaminated by stray light. The stray light effect is well-modeled by convolution with a single unknown parametric PSF. The telescope observes $n$ images of the Moon transiting in front of the Sun, and while the Moon does not emit in the extreme ultraviolet (Fig.~\ref{fig:solar_prob}, \emph{top middle}), stray light from the Sun spills into the lunar disk (\emph{bottom middle}). Given the supports $M^{(k)} \subset \Omega $ of the lunar disks within each image, our task is to determine the PSF by solving
 \begin{equation}
 	\minimize_{y,\{z^{(k)}\}} \sum_{k = 1}^n \norm{A(y) z^{(k)} - b^{(k)}}^2 \quad \text{subject to} \,\,
	\begin{matrix} 
	&z^{(k)} \geq 0 \\ 
	&z^{(k)}_{M^{(k)}} = 0
	\end{matrix} 
	\quad \text{for $k = 1,\ldots,n$}.
	\label{solar_prob}
\end{equation}
The PSF is modeled using two components. The PSF core is modeled by a single pixel with unknown value $\alpha \in (1/2,1]$, while the wings are modeled by a radially symmetric piecewise power law $p_{\bSeries}(r)$ depending on unknown parameters $\bSeries$:
\begin{equation} \label{hmrBuild}
	h_y(v) = \alpha \delta_0(v) + (1-\alpha) p_{\bSeries}(\norm{v}_2), \quad \text{for} \,  v \in \Omega,
\end{equation}
where $\delta_0$ is the Kronecker delta. To define the piecewise power law, we set $p_{\bSeries}(0) = 0$, then for $r > 0$ we set logarithmically spaced breakpoints $(r_i)_{i=0}^S$ defining $S = 12$ subintervals, starting from $r_0 = 1$ and ending at $r_S = \frac{\sqrt{2}}{2} s$ where $s$ is the sidelength of the square $\Omega$. On each subinterval $[r_{i-1},r_i)$, the formula is given by $p_{\bSeries}(r) \propto r^{-\beta_i}$, where $\beta_i \geq 0$, and $\bSeries = (\beta_1,\ldots,\beta_S)$. The proportionality constants are determined by a continuity constraint between subintervals and the normalization constraint $\sum_v p_{\bSeries}(\norm{v}) = 1$. The free parameters of the PSF model $h_y$ are then $y = (\alpha,\bSeries) \in \reals^{N_y}$, where $N_y = 1+S = 13$. The true profile $p_{\bSeries}(r)$ was generated using $\bSeries$ values similar to those in \cite{shearer2012first}, and is shown in log-log scale in (Fig.~\ref{fig:solar_prob}, \emph{top left}), with the resulting PSF directly below.

\begin{figure}
\centering
\includegraphics[scale=1.0]{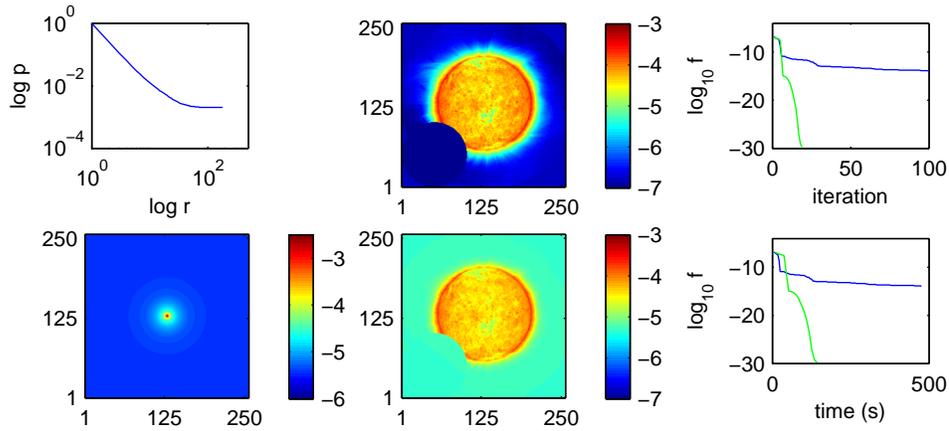}
\caption{Overview of the solar semiblind deconvolution experiment. \emph{Top left:} The ground truth PSF profile $p^{true}(r)$ in log-log scale, where it is piecewise linear. \emph{Bottom left:} The ground truth PSF generated by the profile above. \emph{Top middle:} one of the three clean lunar transit images, with lunar disk in the bottom left corner. \emph{Bottom middle:} the observed image formed by convolving the top image with the PSF. \emph{Right:} semilog plot of objective versus iteration \emph{(top)} and CPU time \emph{(bottom)} for the standard mode of Alg.~\ref{alg:tmp_damp} and the mode employing the mixed CG/Direct method.}
\label{fig:solar_prob}
\end{figure}

We used data from the STEREO-EUVI satellite to generate $n = 3$ synthetic lunar transit images of size $256 \times 256$. To simulate the Moon's transit, a disk of pixels was set to zero in each image. These images were convolved with the ground truth PSF to create the blurry observations, and no noise was added for simplicity. (Noise is not a very important issue in this problem because the real data have very little noise at this resolution, deconvolution with the PSF is quite well-conditioned for $\alpha > 1/2$, and the expected range of $\alpha$ is well above this.)

As before, Alg.~\ref{alg:tmp_damp} was run in two modes: a semi-reduced mode employing Gaussian elimination, and a standard one without. In the standard mode of Alg.~\ref{alg:tmp_damp}, the search direction was calculated by CG on the full system $B \Delta x = -g$.  Preliminary experiments revealed that the full CG algorithm was very slow. However the situation improved substantially when we used a scalar preconditioner $c I$ on the $y$ block, where $c = 10^5$ was found to work well. All CG iterations were stopped at a relative residual tolerance of $10^{-6}$ and maximum iteration ceiling $40$, as these values worked relatively well for the full CG algorithm. In the block Gaussian elimination mode, the search direction was calculated using the mixed CG/Direct algorithm, Alg.~\ref{alg:mixed_dir_cg}. The mixed CG/Direct algorithm required no special tuning or preconditioners.

Our main finding was that the block Gaussian elimination mode using Alg.~\ref{alg:mixed_dir_cg} converged quite quickly and robustly, while the standard mode experienced a long period of sluggish convergence after an initially fast descent (Fig.~\ref{fig:solar_prob}, \emph{right}). The average CPU time per step was about the same in each of the two modes, so we can attribute the block Gaussian elimination mode's superior performance to better search directions, which enabled convergence in far fewer iterations than the full CG mode.

The better search directions of the block Gaussian elimination mode can be explained by considering the unusual spectrum of the Gauss-Newton Hessian. It has two very different components: a large cluster of $\approx N_z$ near-unity eigenvalues due to the very well-conditioned $B_{zz}$ block, and a sprinkling of $\approx N_y$ eigenvalues contributed by the other three blocks. The latter are, to put it lightly, less tame: they can easily spread over 15 orders of magnitude and move unpredictably as the iterations progress.

Naively, we might expect full CG to make short work of such a system. We simply apply a scalar preconditioner to the badly behaved blocks involving $\Delta y$, pushing the $N_y$ scattered eigenvalues to lie above the $N_z$ cluster. Then the spectral theory of CG predicts convergence in $N_y + k_z$ iterations, where $k_z$ is the number of iterations required to make the CG spectral polynomial nearly zero on the $N_z$ cluster \cite{trefethen1997numerical}. We expect $k_z$ to be small because the $N_z$ cluster is very tightly centered around unity.

In practice, however, it is difficult to know in advance where the mobile eigenvalues will be, and their enormous spread raises issues of rounding error. Thus it is difficult to get good solutions out of full CG, and the search directions suffer, causing sluggish convergence. In constrast, the mixed CG/Direct algorithm applies CG to the well-conditioned $B_{zz}$ block alone, and deals with the other blocks by direct linear algebra. Since direct linear algebra is much less susceptible than CG to ill-conditioning and rounding error, the result is high-quality search directions and quick convergence. 

\subsection{A model semiblind deconvolution problem for block trial point adjustment} \label{model_adjustment_prob}
Given the failure of trial point adjustment to speed up the solution of the previous two problems, the reader may wonder if it has any application beyond its theoretical role in the connection between full and reduced update methods. The literature suggests that adjustment certainly can increase convergence rate and robustness \cite{lanzkron1996analysis,smyth1996partitioned,o2012variable,ruhe1980algorithms,gilles2002computational}. However the speed gains relative to standard methods are highly variable: adjusted methods are slower in our experiments, a factor of 2 or 3 times faster in certain image processing problems, and multiple orders of magnitude faster in some difficult curve fitting problems. Clearly adjustments must be adapted to the problem at hand, but it is difficult to predict when it will be useful. Here we present a toy semiblind deconvolution problem similar to the one solved in the previous section, and show that trial point adjustment is valuable for solving this problem in the most difficult cases. 

As in the solar problem, our toy problem involves semiblind deconvolution of an extended, uniformly bright object which has been convolved with a long-range kernel. The true image $u^t$ and kernel $h^t$ are both 1-D signals of length $m$ supported on $\{-j,\ldots,j\}$, where $m = 2j+1$. They come from single-parameter signal families given by $h_y(p) = y \delta_0(i) + (1-y) \frac{1}{m} \mathbf{1}(i)$ and $u_z(i) = z \cdot \mathbf{1}_S(i)$, where $\mathbf{1}_S(i)$ is the indicator for the set $S = \{-\ell,\ldots,\ell\}$ of size $s = 2 \ell + 1$, $\mathbf{1}(i)$ is the constant ones function. Letting $(y^t,z^t)$ denote the unknown true parameter values, the problem is to determine $(y^t,z^t)$ from the blurry observation $f = h^t * u^t$, where periodic convolution and no noise is assumed. 
The values of $y^t$ and $z^t$ can be found by minimizing the difference between $h_y * u_z$ and $f$ with respect to some loss function, which we choose as the Huber loss
\beq
	\ell(x) = 
	\begin{cases}
	\tfrac{1}{2} x^2 & |x| \leq t \\
	t(|x| - \tfrac{s}{2}) & |x| > t
	\end{cases}
\eeq
with threshold $t = 0.3$. We choose this loss function simply because it is a common nonquadratic loss and the optimization phenomenon of interest occurs when it is used.  Noting that physically we must have $0 \leq y \leq 1$ and $z \geq 0$, we obtain the optimization problem
\beq
	\min_{0 \leq y \leq 1, z \geq 0} \bigg\{ F(y,z) \triangleq \sum_i \ell((h_y * u_z - f)_i)  \bigg\}.
	\label{fullObjToy}
\eeq
A simple formula for $F(y,z)$ can be found by observing that both the prediction $h_y * u_z$ and $f$ take only two values. Letting $\rho = s/m$ be the ratio of the object support to signal size, $q_1(y,z) = y z + (1 - y z) \rho$ the predicted value of the blurry image on the support, $q_2(y,z) = (1-y)z \rho$ the predicted value off the support, and $q_i^t = q_i(y^t,z^t)$ for $i = 1,2$, we have
\begin{equation}
	F(y,z) = \frac{m}{2} \left( \rho \ell(q_1(y,z) - q_1^t) + (1-\rho) \ell(q_2(y,z) - q_2^t) \right).
\end{equation}
The $m/2$ scale factor does not affect the location of the minimum nor the path of any of the optimization algorithms we consider here. Therefore, for our purposes, the parameter $\rho$ is effectively the only free parameter in the problem family. We use the values $\rho = 10^{-2}, 10^{-6}$ to create two objectives whose graphs are depicted in Fig.~\ref{fig:jan_comp}, \emph{far left}. As $\rho \rightarrow 0$, the term $\rho \ell(q_1(y,z) - q_1^t)$ vanishes, the $(1-\rho)\ell(q_2(y,z) - q_2^t)$ becomes dominant, and the objective landscape becomes a narrow, hyperbolic trench.

We solved this problem at both values of $\rho$, and for each value we used Alg.~4 in a full update mode (without trial point adjustment) and a semi-reduced mode (with trial point adjustment). In the latter case, the block trial point adjustment used was a single iteration of Alg.~4 to minimize $F(y,z)$ in $z$ with $y$ fixed. Algorithm parameters were chosen as in the previous section. 

The paths taken by the full and semi-reduced methods are shown in Fig.~\ref{fig:jan_comp}, \emph{center left and right}. We observe that the methods take nearly identical paths when $\rho = 10^{-2}$, but when $\rho = 10^{-6}$ the full update method is forced to take very small steps. At \emph{far right}, the distance to the minimum, $\norm{(y^k,z^k) - (y^t,z^t)}_2$, is plotted versus iteration $k$ for each method. The superior convergence rate of the semi-reduced method is clear when $\rho = 10^{-6}$. 

The behavior of each algorithm can be understood by considering the geometry of the steps it takes. The full update method takes steps along straight lines. Straight lines cannot follow a curved trench for long, so there is an upper bound on the size of an admissible step. As $\rho \rightarrow 0$, the trench tightens and the admissible steps become very small, so that progress is very slow. The semi-reduced method takes a `dogleg' step as illustrated in Fig.~\ref{visual_mod}, which enables it to stay in the valley. 

To avoid the small admissible step issue that stymies the full method, it is critical that adjustment be done \emph{before the trial point is evaluated}. This is the key feature distinguishing semi-reduced methods from other methods, such as simple alternation between a full update and a partial update. Other strategies, such as nonmonotone line search \cite{zhang2004nonmonotone} and greedy two-step methods \cite{conn:twoStep:1999}, have a similar step structure and could also work on this problem; however it is unclear if they can match the semi-reduced method's complete insensitivity to the value of $\rho$. 

It is important to note that the phenomenon we have described here does not occur for all loss functions $\ell(x)$. For example, we found that if the Poisson log-likelihood is used, the objective landscape does not have such a tight curved valley, the full update method solves the problem quite efficiently, and the semi-reduced method's inner iterations expend effort without benefit. Curved valleys are thus an occasional problem with potentially severe efficiency consequences. The semi-reduced framework seems appropriate for dealing with such a problem, since one has the option to perform inner descent iterations only when necessary.

\begin{figure}%[h!]
\centering
\includegraphics[scale=1.0]{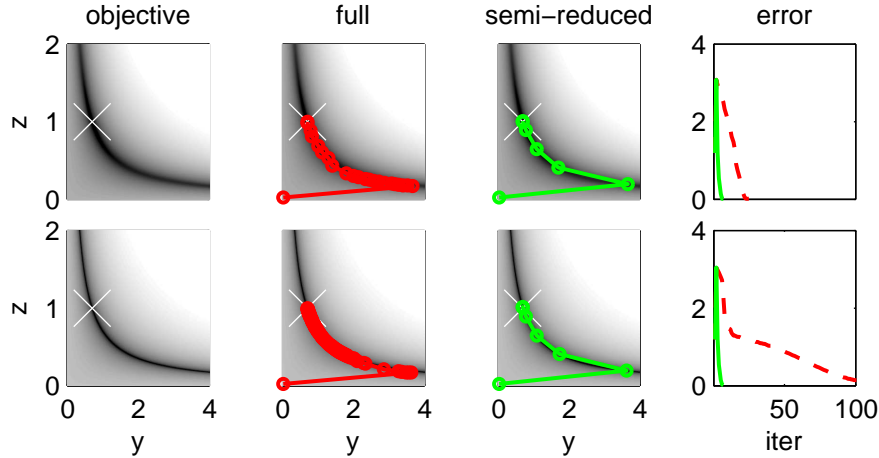}
\caption{Comparison of alternating, joint, and nested descent strategies on toy blind deconvolution problem. \emph{Row 1:} Plots of $F(y,z)$ for $\rho = 10^{-2}$ (top) and $10^{-6}$ (bottom), logarithmic greyscale. The white crosses mark the minimum at $(y^t,z^t) = (0.7,1)$, where $F = 0$. \emph{Center left and right:} The iterates of the full and semi-reduced methods for each $\rho$ value, starting from $(y^0,z^0) = (0.02,0.02)$. \emph{Far right:} semilogarithmic plot of the error $\norm{(y^k,z^k) - (y^t,z^t)}_2$ versus iteration $k$ for the full method (dashed line) and semi-reduced method (solid line). \vspace{-3mm}}
\label{fig:jan_comp}
\end{figure}

\section{Conclusion}

Reduced update optimization methods, which are based on variable elimination, have been found to be particularly fast and robust in certain difficult separable inverse problems. Unfortunately, using them in problems beyond unconstrained least squares presents serious theoretical and practical difficulties, in particular the need for expensive optimal trial point adjustments and complex derivatives of a reduced functional. We have described a new class of \emph{semi-reduced} methods which interpolate between full and reduced methods. Semi-reduced methods share the desirable characteristics of reduced methods while being flexible enough to avoid their downsides. A key advantage of the semi-reduced framework is the flexibility to use adjustments where they are useful and avoid them where they are not, all within a single convergent method.

We began by reinterpreting reduced methods as full update methods that have been modified and simplified. We showed that if \emph{block Gaussian elimination} and an optimal \emph{block trial point adjustment} are used within a full update method, the adjustment's optimality renders certain computations unnecessary. Removing these unnecessary computations yields a simplified method that turns out to be equivalent to a reduced method. To confirm that this reinterpretation of reduced update methods is correct and generally applicable, we derived the well-known reduced update Newton and variable projection methods using our modification and simplification process. We defined semi-reduced methods by omitting the final simplifications, which frees us from the need to perform expensive optimal block trial point adjustments. We then incorporated block Gaussian elimination and trial point adjustment into an algorithm for general bound constrained problems, and showed that its convergence follows almost immediately from the convergence theorem for the original method. Finally, we showed that many of the structure-exploiting properties of variable elimination can be obtained by using appropriate block Gaussian elimination algorithms.

Block Gaussian elimination is suited for problems where the Hessian model's $B_{zz}$ subblock is block diagonal, circulant, banded, or has other exploitable structure. We described two situations where we expected block Gaussian elimination to outperform a standard all-at-once method, and these expectations were borne out in numerical experiments on realistic problems derived from the scientific inverse problem literature. It is notable that both of the methods we presented involve the solution of many independent subproblems and are thus ideal candidates for parallelization.

Block trial point adjustment is appropriate when we expect the graph of $F(y,z)$ to contain a narrow, curved valley. Trial points from full update methods tend to leave the valley and thus will be rejected unless a trial point adjustment is used to return to the valley. In our first two numerical experiments trial point adjustments turned out to be computationally wasteful, so it was critical that we had the flexibility to perform suboptimal adjustments or even none at all (which turned out to be the best option). In our third experiment we presented a reasonable toy inverse problem where the curved valley effect was significant enough to warrant trial point adjustment, but the parameter values where this occurred were somewhat extreme. Since the curved valley effect is important in some real problems \cite{lanzkron1996analysis,conn:twoStep:1999}, a better understanding of precisely when it occurs would be useful. 

\section{References}

\bibliographystyle{unsrt}
%\bibliography{/Users/shearerp/Desktop/master_bib/master_bib}
\bibliography{master_bib}

\begin{thebibliography}{10}

\bibitem{Yamaoka:sumOfExpAIC:1978}
K.~Yamaoka, T.~Nakagawa, and T.~Uno.
\newblock Application of {A}kaike's information criterion in the evaluation of
  linear pharmacokinetic equations.
\newblock {\em Journal of Pharmacokinetics and Pharmacodynamics}, 6:165--175,
  1978.

\bibitem{Mullen:varProInSpect:2009}
K.~M. Mullen and I.~H.~M. van Stokkum.
\newblock The variable projection algorithm in time-resolved spectroscopy,
  microscopy and mass spectrometry applications.
\newblock {\em Numerical Algorithms}, 51:319--340, 2009.

\bibitem{Campisi:BlindImageDeconvolution:2007}
P.~Campisi and K.~Egiazarian.
\newblock {\em Blind Image Deconvolution: Theory and Applications}.
\newblock CRC Press, 2007.

\bibitem{Chung:2010}
J.~Chung and J.~G. Nagy.
\newblock An efficient iterative approach for large-scale separable nonlinear
  inverse problems.
\newblock {\em SIAM J. Sci. Comput.}, 31:4654--4674, January 2010.

\bibitem{Golub:separablenonlinear:2002}
G.~Golub and V.~Pereyra.
\newblock Separable nonlinear least squares: the variable projection method and
  its applications.
\newblock In {\em Institute of Physics, Inverse Problems}, pages 1--26, 2002.

\bibitem{hansen2013least}
P.~C. Hansen, V.~Pereyra, and G.~Scherer.
\newblock {\em Least Squares Data Fitting with Applications}.
\newblock John Hopkins University Press, 2013.

\bibitem{pereyra2010exponential}
V.~Pereyra and G.~(Ed) Scherer.
\newblock {\em Exponential Data Fitting and its Applications}.
\newblock Bentham Science Publishers, Oak Park, 2010.

\bibitem{golub1979extensions}
Gene~H. Golub and Randall~J. LeVeque.
\newblock Extensions and uses of the variable projection algorithm for solving
  nonlinear least squares problems.
\newblock In {\em Proceedings of the Army Numerical Analysis and Computing
  Conference, US Army Research Office, Washington DC}, pages 1--12, 1979.

\bibitem{kaufman1992separable}
L.~Kaufman and G.~Sylvester.
\newblock Separable nonlinear least squares with multiple right-hand sides.
\newblock {\em SIAM Journal on Matrix Analysis and Applications}, 13(1):68--89,
  1992.

\bibitem{mullen2007timp}
K.~M. Mullen and I.~H.~M. Van~Stokkum.
\newblock {TIMP}: an {R} package for modeling multi-way spectroscopic
  measurements.
\newblock {\em Journal of Statistical Software}, 18(3):1--46, 2007.

\bibitem{NocedalWright:NumericalOptimization:2006}
J.~Nocedal and S.~J. Wright.
\newblock {\em Numerical Optimization}.
\newblock Springer-Verlag New York, Inc., 2nd edition, 2006.

\bibitem{bardsley2004nonnegatively}
J.~M. Bardsley and C.~R. Vogel.
\newblock A nonnegatively constrained convex programming method for image
  reconstruction.
\newblock {\em SIAM J. Sci. Comput.}, 25(4):1326--1343, 2004.

\bibitem{SimaVanHuffel:varProBndMRS:2007}
D.~M. Sima and S.~Van Huffel.
\newblock Separable nonlinear least squares fitting with linear bound
  constraints and its application in magnetic resonance spectroscopy data
  quantification.
\newblock {\em Journal of Computational and Applied Mathematics}, 203(1):264 --
  278, 2007.

\bibitem{vogelComputationalMethods}
C.~R. Vogel.
\newblock {\em Computational Methods for Inverse Problems}.
\newblock SIAM, 2002.

\bibitem{bertero2009image}
M.~Bertero, P.~Boccacci, G.~Desidera, and G.~Vicidomini.
\newblock Image deblurring with poisson data: from cells to galaxies.
\newblock {\em Inverse Problems}, 25(12), 2009.

\bibitem{maus2001experimental}
M.~Maus, M.~Cotlet, et~al.
\newblock An experimental comparison of the maximum likelihood estimation and
  nonlinear least-squares fluorescence lifetime analysis of single molecules.
\newblock {\em Analytical chemistry}, 73(9):2078--2086, 2001.

\bibitem{Larson:2010fk}
D.~R. Larson.
\newblock The economy of photons.
\newblock {\em Nature Methods}, 7(5):357--359, 05 2010.

\bibitem{Laurence:2010:mlePoisson}
T.~A. Laurence and B.~A. Chromy.
\newblock Efficient maximum likelihood estimator fitting of histograms.
\newblock {\em Nature Methods}, 2010.

\bibitem{kelley1987iterative}
C.T. Kelley.
\newblock {\em Iterative Methods for Optimization}.
\newblock Frontiers in Applied Mathematics. SIAM, 1987.

\bibitem{saad1996iterative}
Y.~Saad.
\newblock {\em Iterative methods for sparse linear systems}, volume 620.
\newblock PWS Publishing Company Boston, 1996.

\bibitem{boyd2004convex}
S.~Boyd and L.~Vandenberghe.
\newblock {\em Convex Optimization}.
\newblock Cambridge University Press, 2004.

\bibitem{bertsekas1999nonlinear}
D.P. Bertsekas.
\newblock {\em Nonlinear Programming}.
\newblock Athena Scientific, 1999.

\bibitem{bonettini2011inexact}
Silvia Bonettini.
\newblock Inexact block coordinate descent methods with application to
  non-negative matrix factorization.
\newblock {\em IMA journal of numerical analysis}, 31(4):1431--1452, 2011.

\bibitem{grippo1999globally}
L.~Grippo and M.~Sciandrone.
\newblock Globally convergent block-coordinate techniques for unconstrained
  optimization.
\newblock {\em Optimization methods and software}, 10(4):587--637, 1999.

\bibitem{Buchanan05}
A.~M. Buchanan and A.~W. Fitzgibbon.
\newblock Damped {Newton} algorithms for matrix factorization with missing
  data.
\newblock In {\em IEEE Conference on Computer Vision and Pattern Recognition},
  volume~2, pages 316--322, 2005.

\bibitem{ruhe1980algorithms}
A.~Ruhe and P.~{\AA}. Wedin.
\newblock Algorithms for separable nonlinear least squares problems.
\newblock {\em {SIAM} Review}, 22(3):318--337, 1980.

\bibitem{golub1973differentiation}
Gene~H. Golub and Victor Pereyra.
\newblock The differentiation of pseudo-inverses and nonlinear least squares
  problems whose variables separate.
\newblock {\em SIAM Journal on NumericalAnalysis}, 10(2):413--432, 1973.

\bibitem{smyth1996partitioned}
G.K. Smyth.
\newblock Partitioned algorithms for maximum likelihood and other non-linear
  estimation.
\newblock {\em Statistics and Computing}, 6(3):201--216, 1996.

\bibitem{o2012variable}
D.P. O'Leary and B.W. Rust.
\newblock Variable projection for nonlinear least squares problems.
\newblock {\em Computational Optimization and Applications}, pages 1--15, 2012.

\bibitem{Vogel:PhaseDiv:1998}
C.~R. {Vogel}, T.~F. {Chan}, and R.~J. {Plemmons}.
\newblock {Fast algorithms for phase-diversity-based blind deconvolution}.
\newblock In D.~{Bonaccini} and R.~K. {Tyson}, editors, {\em Society of
  Photo-Optical Instrumentation Engineers (SPIE) Conference Series}, volume
  3353, pages 994--1005, September 1998.

\bibitem{gilles2002computational}
L.~Gilles, C.~R. Vogel, and J.~M. Bardsley.
\newblock Computational methods for a large-scale inverse problem arising in
  atmospheric optics.
\newblock {\em Inverse Problems}, 18(1):237, 2002.

\bibitem{fan2011efficient}
Y.~W.~D. Fan and J.~G. Nagy.
\newblock An efficient computational approach for multiframe blind
  deconvolution.
\newblock {\em Journal of Computational and Applied Mathematics}, 2011.

\bibitem{parks1985reducible}
T.A. Parks.
\newblock {\em Reducible nonlinear programming problems (separable least
  squares)}.
\newblock PhD thesis, Rice University, 1985.

\bibitem{chan1985approximate}
T.F. Chan.
\newblock An approximate newton method for coupled nonlinear systems.
\newblock {\em SIAM journal on numerical analysis}, 22(5):904--913, 1985.

\bibitem{zhang2005schur}
F.~Zhang.
\newblock {\em The Schur complement and its applications}, volume~4.
\newblock Springer, 2005.

\bibitem{Conn:1998:techreport}
A.~R. Conn, L.~N. Vicente, and C.~Visweswariah.
\newblock Two-step algorithms for nonlinear op timization with structured
  applications.
\newblock Research Report RC 21198(94689), IBM Research Division, T. J. Watson
  Research Center, Yorktown Heights, NY, 1998.

\bibitem{conn:twoStep:1999}
Andrew~R. Conn, Luis~N. Vicente, and Chandu Visweswariah.
\newblock Two-step algorithms for nonlinear optimization with structured
  applications.
\newblock {\em SIAM Journal on Optimization}, 9(4):924--947, 1999.

\bibitem{absil2009accelerated}
P.~A. Absil and K.~A. Gallivan.
\newblock Accelerated line-search and trust-region methods.
\newblock {\em SIAM Journal on Numerical Analysis}, 47(2):997--1018, 2009.

\bibitem{Nagy:nonnegVarPro:2012}
A.~Cornelio, E.~Loli Piccolomini, and J.~G. Nagy.
\newblock Constrained variable projection method for blind deconvolution.
\newblock {\em Journal of Physics: Conference Series}, 386(1):012005, 2012.

\bibitem{richards1961mle}
F.~S.~G. Richards.
\newblock A method of maximum-likelihood estimation.
\newblock {\em Journal of the Royal Statistical Society. Series B
  (Methodological)}, 23(2):pp. 469--475, 1961.

\bibitem{Kaufman:1975}
L.~Kaufman.
\newblock A variable projection method for solving separable nonlinear least
  squares problems.
\newblock {\em BIT Numerical Mathematics}, 15:49--57, 1975.

\bibitem{bertsekas1982projected}
D.P. Bertsekas.
\newblock Projected newton methods for optimization problems with simple
  constraints.
\newblock {\em SIAM Journal on Control and Optimization}, 20(2):221--246, 1982.

\bibitem{gafni1984two}
E.M. Gafni and D.P. Bertsekas.
\newblock Two-metric projection methods for constrained optimization.
\newblock {\em SIAM Journal on Control and Optimization}, 22(6):936--964, 1984.

\bibitem{schmidt2007fast}
M.~Schmidt, G.~Fung, and R.~Rosales.
\newblock Fast optimization methods for l1 regularization: A comparative study
  and two new approaches.
\newblock {\em Machine Learning: ECML 2007}, pages 286--297, 2007.

\bibitem{schmidt20111}
M.~Schmidt, D.~Kim, and S.~Sra.
\newblock Projected newton-type methods in machine learning.
\newblock {\em Optimization for Machine Learning}, page 305, 2011.

\bibitem{Bunch:quasiLikeAlg:1993}
D.~S. Bunch, D.~M. Gay, and R.~E. Welsch.
\newblock Algorithm 717: Subroutines for maximum likelihood and
  quasi-likelihood estimation of parameters in nonlinear regression models.
\newblock {\em ACM Trans. Math. Softw.}, 19(1):109--130, March 1993.

\bibitem{mullen2010sum}
K.~M. Mullen and I.~H.~M. van Stokkum.
\newblock Sum-of-exponentials models for time-resolved spectroscopy data.
\newblock In V.~Pereyra and G.~Scherer, editors, {\em Exponential Data Fitting
  and its Applications}, chapter~6. Bentham Science Publishers, Oak Park, 2010.

\bibitem{Wedderburn:quasiLike:1974}
R.~W.~M. Wedderburn.
\newblock Quasi-likelihood functions, generalized linear models, and the
  gauss-newton method.
\newblock {\em Biometrika}, 61(3):439--447, 1974.

\bibitem{davis2011algorithm}
T.A. Davis.
\newblock Algorithm 915, {SuiteSparseQR}: Multifrontal multithreaded
  rank-revealing sparse qr factorization.
\newblock {\em ACM Transactions on Mathematical Software (TOMS)}, 38(1):8,
  2011.

\bibitem{shearer2012first}
P.~Shearer, R.~A. Frazin, A.~O. {Hero III}, and A.~C. Gilbert.
\newblock The first stray light corrected extreme-ultraviolet images of solar
  coronal holes.
\newblock {\em The Astrophysical Journal Letters}, 749(1):L8, 2012.

\bibitem{trefethen1997numerical}
L.N. Trefethen and D.~Bau~III.
\newblock {\em Numerical linear algebra}.
\newblock SIAM, 1997.

\bibitem{lanzkron1996analysis}
P.J. Lanzkron, D.J. Rose, and J.T. Wilkes.
\newblock An analysis of approximate nonlinear elimination.
\newblock {\em SIAM J. Sci. Comput.}, 17(2):538--559, 1996.

\bibitem{zhang2004nonmonotone}
H.~Zhang and W.W. Hager.
\newblock A nonmonotone line search technique and its application to
  unconstrained optimization.
\newblock {\em SIAM Journal on Optimization}, 14(4):1043--1056, 2004.

\end{thebibliography}

\end{document}